\newtheorem{theorem}{Theorem}[section]
\newtheorem{lemma}[theorem]{Lemma}
\newtheorem{prop}[theorem]{Proposition}
\theoremstyle{definition}
\newtheorem{definition}[theorem]{Definition}
\newtheorem{example}[theorem]{Example}
\numberwithin{equation}{section}
\DeclareMathOperator{\lspan}{span}
\DeclareMathOperator*{\cone}{cone}
\DeclareMathOperator{\Spec}{Spec}
\DeclareMathOperator*{\Hom}{Hom}
\newcommand{\SL}{\mathrm{SL}}
\newcommand{\GL}{\mathrm{GL}}
\newcommand{\Sp}{\mathrm{Sp}}
\newcommand{\Spin}{\mathrm{Spin}}
\newcommand{\rleft}{\mathopen{}\mathclose\bgroup\left}
\newcommand{\rright}{\aftergroup\egroup\right}
\newcommand{\C}{\mathds{C}}
\newcommand{\Q}{\mathds{Q}}
\newcommand{\N}{\mathds{N}}
\newcommand{\Z}{\mathds{Z}}
\newcommand{\LL}{\mathds{L}}
\newcommand{\Om}{\mathcal{O}}
\newcommand{\Vm}{\mathcal{V}}
\newcommand{\Cm}{\mathcal{C}}
\newcommand{\Fm}{\mathcal{F}}
\newcommand{\Dm}{\mathcal{D}}
\newcommand{\Rm}{\mathcal{R}}
\newcommand{\Pm}{\mathcal{P}}
\newcommand{\Nm}{\mathcal{N}}
\newcommand{\Mm}{\mathcal{M}}
\newcommand{\X}{\mathfrak{X}}
\newcommand{\Ss}{\mathscr{S}}
\newcommand{\Us}{\mathscr{U}}
\newcommand{\ie}{i.\,e.~}
\newcommand{\acts}{\curvearrowright}
\newsavebox{\afournolines}
\savebox{\afournolines}{\multiput(0,0)(5400,0){2}{\circle{600}}
\multiput(0,0)(25,25){13}{\circle*{70}}\multiput(300,300)(300,0){16}{\multiput(0,0)(25,-25){7}{\circle*{70}}}\multiput(450,150)(300,0){16}{\multiput(0,0)(25,25){7}{\circle*{70}}}\multiput(5400,0)(-25,25){13}{\circle*{70}}}
\newsavebox{\doublenotop}
\savebox{\doublenotop}{
\put(0,1200){\usebox{\aone}}
\put(0,-1200){\usebox{\aone}}
\put(0,300){\line(0,-1){600}}
\put(0,-2100){\line(0,-1){600}}
\put(0,-2700){\line(-1,0){900}}
\put(-900,-2700){\line(0,1){5400}}
\put(-1800,2700){\line(1,0){900}}
}
\newsavebox{\double}
\savebox{\double}{
\put(0,1200){\usebox{\aone}}
\put(0,-1200){\usebox{\aone}}
\put(0,300){\line(0,-1){600}}
\put(0,2100){\line(0,1){600}}
\put(0,-2100){\line(0,-1){600}}
\put(0,-2700){\line(-1,0){900}}
\put(-900,-2700){\line(0,1){5400}}
\put(-1800,2700){\line(1,0){900}}
}
\newsavebox{\doublesusp}
\savebox{\doublesusp}{
\put(0,1200){\usebox{\aone}}
\put(0,-1200){\usebox{\aone}}
\put(0,300){\line(0,-1){600}}
\put(0,2100){\line(0,1){600}}
\put(0,-2100){\line(0,-1){600}}
\multiput(0,-2700)(-200,0){3}{\line(-1,0){100}}
\multiput(-1800,2700)(200,0){3}{\line(1,0){100}}
}
\newsavebox{\doublebegin}
\savebox{\doublebegin}{
\put(0,1200){\usebox{\aone}}
\put(0,-1200){\usebox{\aone}}
\put(0,300){\line(0,-1){600}}
\put(0,2100){\line(0,1){600}}
}
\newsavebox{\doubleendlong}
\savebox{\doubleendlong}{
\put(0,-1200){\usebox{\aone}}
\put(0,-2100){\line(0,-1){600}}
\put(0,-2700){\line(-1,0){900}}
\put(-900,-2700){\line(0,1){5400}}
\put(-1800,2700){\line(1,0){900}}
}
\newsavebox{\rdoublenobot}
\savebox{\rdoublenobot}{
\put(0,1200){\usebox{\aone}}
\put(0,-1200){\usebox{\aone}}
\put(0,300){\line(0,-1){600}}
\put(0,2100){\line(0,1){600}}
\put(0,2700){\line(-1,0){900}}
\put(-900,-2700){\line(0,1){5400}}
\put(-1800,-2700){\line(1,0){900}}
}
\newsavebox{\rdouble}
\savebox{\rdouble}{
\put(0,1200){\usebox{\aone}}
\put(0,-1200){\usebox{\aone}}
\put(0,300){\line(0,-1){600}}
\put(0,2100){\line(0,1){600}}
\put(0,-2100){\line(0,-1){600}}
\put(0,2700){\line(-1,0){900}}
\put(-900,-2700){\line(0,1){5400}}
\put(-1800,-2700){\line(1,0){900}}
}
\newsavebox{\rdoublesusp}
\savebox{\rdoublesusp}{
\put(0,1200){\usebox{\aone}}
\put(0,-1200){\usebox{\aone}}
\put(0,300){\line(0,-1){600}}
\put(0,2100){\line(0,1){600}}
\put(0,-2100){\line(0,-1){600}}
\multiput(0,2700)(-200,0){3}{\line(-1,0){100}}
\multiput(-1800,-2700)(200,0){3}{\line(1,0){100}}
}
\newsavebox{\rdoublebegin}
\savebox{\rdoublebegin}{
\put(0,1200){\usebox{\aone}}
\put(0,-1200){\usebox{\aone}}
\put(0,300){\line(0,-1){600}}
\put(0,-2700){\line(0,1){600}}
}
\begin{document}
\selectlanguage{english}

\title{A combinatorial smoothness criterion for spherical varieties}

\author{Giuliano Gagliardi}
\address{Fachbereich Mathematik, Universit\"at T\"ubingen, Auf der
Morgenstelle 10, 72076 T\"ubingen, Germany}
\curraddr{}
\email{giuliano.gagliardi@uni-tuebingen.de}

\thanks{}

\date{}

\begin{abstract}
We suggest a combinatorial criterion for the smoothness of an arbitrary spherical
variety using the classification of multiplicity-free spaces, generalizing
an earlier result of Camus for spherical varieties of type $A$.
\end{abstract}

\maketitle

\section{Introduction}
A closed subgroup $H$ of a connected reductive
complex algebraic group $G$ is
called \emph{spherical} if $G/H$ contains an open $B$-orbit
where $B \subseteq G$ is a Borel subgroup.
In this case, $G/H$ is called a \emph{spherical homogeneous space}.
A $G$-equivariant open embedding $G/H \hookrightarrow X$ into
a normal irreducible $G$-variety $X$ is called a \emph{spherical embedding},
and $X$ is called a \emph{spherical variety}.
A spherical embedding $G/H \hookrightarrow X$ (or the spherical variety $X$)
is called \emph{simple} if $X$ contains exactly one closed $G$-orbit.

Spherical varieties can be classified combinatorially, where the two
required steps are the classification of spherical subgroups $H \subseteq G$
for fixed $G$ and the classification of spherical embeddings
$G/H \hookrightarrow X$ for fixed $G/H$.

The historically earlier and easier of the two steps is the
classification of spherical embeddings of a fixed spherical
homogeneous space $G/H$. This is done using
the Luna-Vust theory (see~\cite{lunavust, knopsph}).

We denote by $\Mm \subseteq \X(B)$ the weight lattice of $B$-semi-invariants in
the function field $\C(G/H)$, by $\Nm \coloneqq \Hom(\Mm, \Z)$ the dual lattice together with the natural
pairing $\langle \cdot , \cdot \rangle\colon \Nm \times \Mm \to \Z$,
and define $\Nm_\Q \coloneqq \Nm \otimes_\Z \Q$.
We denote by $\Dm$ the set of $B$-invariant prime divisors in $G/H$. The elements
of $\Dm$ are called the \emph{colors} of $G/H$.
To each $D \in \Dm$ we associate the element $\rho(D) \in \Nm$
defined by $\langle \rho(D), \chi \rangle \coloneqq \nu_D(f_\chi)$ for $\chi \in \Mm$
where $\nu_D$ is the discrete valuation on $\C(G/H)$ induced by the prime divisor $D$ and $f_\chi \in \C(G/H)$
is a $B$-semi-invariant rational function of weight $\chi$ (and uniquely determined
up to a constant factor).
We denote by $\Vm$ the set of $G$-invariant discrete valuations
on $\C(G/H)$. The assignment $\iota\colon \Vm \to \Nm_\Q$ 
defined by $\langle \iota(\nu) , \chi \rangle \coloneqq \nu(f_{\chi})$ is injective,
hence $\Vm \subseteq \Nm_\Q$ may be regarded as a subset.
It is known that the set $\Vm$ is a cosimplicial cone (see~\cite{brionsym})
called the \emph{valuation cone} of $G/H$.

A \emph{colored cone} is a pair $(\Cm, \Fm)$ such that
$\Fm \subseteq \Dm$ and $\Cm \subseteq \Nm_\Q$ is a convex cone generated
by $\rho(\Fm)$ and finitely many elements of $\Vm$
satisfying $\Cm^\circ \cap \Vm \ne \emptyset$ where $\Cm^\circ$
denotes the relative interior of $\Cm$.
A colored cone is called \emph{strictly convex} if
$\Cm$ is strictly convex and $0 \notin \rho(\Fm)$.
According to the Luna-Vust theory, there is a bijection
between strictly convex colored cones and isomorphism classes of
simple spherical embeddings of $G/H$.

As the purpose of this paper is to obtain a combinatorial
criterion for the smoothness of an arbitrary spherical variety
and any spherical embedding admits an open cover by simple embeddings,
we may restrict our attention to the simple case.

Let $G/H \hookrightarrow X$ be a simple spherical embedding corresponding to the colored
cone $(\Cm, \Fm)$. It is known that the variety $X$ is locally factorial
if and only if $\Cm$ is spanned by a part of
a $\Z$-basis of $\Nm$ containing $\rho(\Fm)$
and $\rho|_{\Fm}$ is injective (see~\cite[5.1]{brsph}).
If $\Fm = \emptyset$ (in this case $X$ is called \emph{toroidal}),
then $X$ is smooth if and only if it is locally factorial.
In general, however, the information $(\Cm, \Fm)$ alone
does not suffice to determine the smoothness of $X$,
so that we have to take into account
the homogeneous space $G/H$ as well.

This leads us to consider the classification of spherical subgroups of a fixed
connected reductive complex algebraic group $G$. We give an 
overview over the approach proposed by Luna in \cite{Luna:typea}.
Fix a Borel subgroup $B \subseteq G$ and a maximal
torus $T \subseteq B$, let $R$ be the corresponding root system
of $G$, and let $S \subseteq R$ be the set of simple roots corresponding to $B$.
Let $H \subseteq G$ be a spherical subgroup.
As the valuation cone $\Vm$ is cosimplicial, there exists
a uniquely determined linearly independent
set $\Sigma \subseteq \Mm$ 
of primitive elements such that $\Vm$
is the intersection of the half-spaces
$\{u \in \Nm_\Q : \langle u, \gamma \rangle \le 0\}$ for $\gamma \in \Sigma$.
The elements of $\Sigma$ are called the \emph{spherical roots} of $G/H$.
We also associate to each $D \in \Dm$ the
subset $\varsigma(D) \subseteq S$ of simple roots $\alpha$ such
that $P_\alpha \cdot D \ne D$ where $P_\alpha \subseteq G$ denotes the
minimal parabolic subgroup corresponding to $\alpha \in S$.
We regard $\Dm$ as a purely combinatorial object by treating it 
as a finite set equipped with the two functions $\rho\colon \Dm \to \Nm$
and $\varsigma\colon \Dm \to \Pm(S)$.

For any simple root
$\alpha \in S$ we define
$\Dm(\alpha) \coloneqq \{D \in \Dm : \alpha \in \varsigma(D)\}$, i.e.~the
set of colors moved by $P_\alpha$, and
define $S^p \coloneqq \{\alpha \in S : \Dm(\alpha) = \emptyset\}$.
Then the parabolic subgroup $P_{S^p} \subseteq G$ corresponding to $S^p \subseteq S$
is the stabilizer of
the open $B$-orbit.
We say that a color $D \in \Dm(\alpha)$ is of \emph{type $a$}
if $\alpha \in \Sigma$, of \emph{type $2a$} if $2\alpha \in \Sigma$,
and of \emph{type $b$} otherwise. The type of a color $D$
is independent of the simple root $\alpha$ such that $D \in \Dm(\alpha)$,
and we obtain a disjoint union
$\Dm = \Dm^a \cup \Dm^{2a} \cup \Dm^b$.
We now consider the assignment
\begin{align*}
H \mapsto \Ss \coloneqq (\Mm, \Sigma, S^p, \Dm^a)
\end{align*}
sending a spherical subgroup $H \subseteq G$ to the indicated quadruple $\Ss$
where $\Dm^a$ is now treated as a finite set equipped with the function
$\rho|_{\Dm^a}\colon \Dm^a \to \Nm$, i.e.~the
information $\varsigma|_{\Dm^a}\colon \Dm^a \to \Pm(S)$ is discarded.
A quadruple $\Ss$ which lies in the image of the above assignment
is called a \emph{homogeneous spherical datum}
(see~\cite[Definition~30.21]{ti}).
Whenever we have $\Mm = \lspan_\Z \Sigma$, we may omit the first element of the quadruple
and call $(\Sigma, S^p, \Dm^a)$ a \emph{spherical system}.
The above assignment is injective up to conjugation of $H$ (see~\cite{losev-uniq}).
A more difficult problem is to describe explicitly which quadruples are
homogeneous spherical data. Luna gave a conjectural list of axioms and
proved his conjecture for spherical varieties of type $A$, i.e.~where every connected component of
the root system of $G$ is of type $A$. There are now two
proposed solutions for the general case
(see~\cite{cf2, bp}).

Summarizing, a simple spherical $G$-variety $X$ is determined by the homogeneous spherical datum $\Ss$ of its
open $G$-orbit and its colored cone $(\Cm, \Fm)$. The purpose of this paper
is to obtain a smoothness criterion in terms of these
data.

For spherical embeddings of $G/U$ where $U \subseteq G$ is a maximal unipotent subgroup
there is a smoothness criterion due to Pauer (see~\cite[3.5]{pauer-gus}),
which has been generalized to arbitrary horospherical varieties by
Pasquier (see~\cite[Th\'eor\`eme~2.6]{Pasquier:FanoHorospherical}) and
Timashev (see~\cite[Theorem~28.10]{ti}).
A combinatorial smoothness criterion for
spherical varieties of type $A$ has been obtained
by Camus (see~\cite[Th\'{e}or\`{e}me~6.3.A]{camus}).
There is also a smoothness criterion for arbitrary
spherical varieties due to Brion (see~\cite[4.2]{br-sphgeom}) 
and
a classification of smooth affine spherical varieties due to Knop and
Van~Steirteghem (see~\cite{ks-affine}).
Finally, there is a new conjectural smoothness criterion for arbitrary
spherical varieties by Batyrev and Moreau (see~\cite[Theorem~5.3]{bm}),
which they have proven for horospherical varieties.

Our result is a generalization of the combinatorial smoothness criterion
due to Camus, where multiplicity-free spaces play an important role.
A multiplicity-free space is a vector space with linear $G$-action
which is also a spherical $G$-variety.
It follows from the \'etale slice theorem of Luna (see~\cite{luna-slet}) that
a smooth affine spherical $G$-variety $X$ containing a fixed point $x$
is equivariantly isomorphic to the multiplicity-free space $V \coloneqq T_x(X)$.
Multiplicity-free spaces have been classified by
Benson and Ratcliff (see~\cite{sr-benrat}) as well as Leahy (see~\cite{sr-leahy})
with special cases previously considered in \cite{sr-kac} and \cite{sr-brion}.
A convenient list with additional data can be found in \cite{sr-knop}.

We have to turn our attention to some more points concerning the combinatorial
classification of spherical subgroups.
Let $H \subseteq G$ be a spherical subgroup corresponding to the
homogeneous spherical datum $\Ss \coloneqq (\Mm, \Sigma, S^p, \Dm^a)$.
The equivariant automorphism group of $G/H$ can be identified with $N_G(H)/H$, hence
$N_G(H)$ acts on $\Dm$.
The subgroup $\overline{H}$ of $N_G(H)$ which stabilizes $\Dm$ is called the \emph{spherical closure} of $H$. 
The homogeneous spherical datum $\overline{\Ss}$ corresponding to
$\overline{H}$ can be obtained as follows:
We denote by $\overline{\Sigma}$ the set obtained from $\Sigma$
by replacing every $\gamma \in \Sigma$ by $2\gamma$ if possible
and leaving $\gamma$ unchanged otherwise such that $\overline{\Ss} \coloneqq (\lspan_\Z \overline{\Sigma},
\overline{\Sigma}, S^p, \Dm^a)$
remains a homogeneous spherical datum.
A homogeneous spherical datum $\Ss$ (resp.~a spherical subgroup $H \subseteq G$) is called \emph{spherically closed}
if $\Ss = \overline{\Ss}$ (resp.~$H = \overline{H}$).

In this paper (except briefly in Section~\ref{sec:list}),
only spherically closed homogeneous spherical data will
be considered as spherical systems.
For such a spherical system
we have $\Sigma \subseteq \lspan_\N S$, and the spherical system
does not depend on the reductive group $G$, but only
on its root system $R$.
If $\Ss_1 \coloneqq (\Sigma_1, S^p_1, \Dm^a_1)$ and $\Ss_2 \coloneqq (\Sigma_2, S^p_2, \Dm^a_2)$
are spherical systems for root systems $R_1$ and $R_2$ respectively,
we define the product
\begin{align*}
\Ss_1 \times \Ss_2
\coloneqq (\Sigma_1\cup\Sigma_2, S^p_1 \cup S^p_2, \Dm^a_1 \cup \Dm^a_2)
\end{align*}
with $\langle \rho(\Dm_1^a), \Sigma_2 \rangle = \{0\}$
and $\langle \rho(\Dm_2^a), \Sigma_1 \rangle = \{0\}$,
which is a spherical system for the root system $R_1 \times R_2$.
Then $\Ss_1$ and $\Ss_2$ are called \emph{components} of $\Ss_1 \times \Ss_2$,
and spherical systems which cannot be written as a nontrivial product are
called \emph{indecomposable}. From this we obtain the notion
of the decomposition of $\Ss$ into \emph{indecomposable components}.

We now consider multiplicity-free spaces $V$ with acting group $G \coloneqq G^{ss} \times (\C^*)^k$
where $G^{ss}$ is a semisimple simply-connected
group, $V = V_1 \oplus \dots \oplus V_k$ is the decomposition into irreducible $G$-modules,
and the $i$-th $\C^*$-factor acts naturally on $V_i$. Such a multiplicity-free space
is called \emph{indecomposable} if it cannot be written as $G' \times G'' \times (\C^*)^k$
acting on $V' \oplus V''$ where $G'$ acts on $V'$, $G''$ acts on $V''$, either
$G'$ is nontrivial or $V'$ is nonzero, and either $G''$ is nontrivial or $V''$ is nonzero.
We write $\Ss_V$ for the homogeneous spherical datum of the open $G$-orbit in $V$.

\begin{definition}
We define $\LL$ to be the list of spherical systems
$\overline{\Ss_V}$ where $V$ is a nonzero indecomposable multiplicity-free space as above (with $G^{ss}$ nontrivial).
Moreover, we say
that a spherical root $\gamma$ of an entry in the list $\LL$ corresponding to $V$
is \emph{marked} if there exists a $G$-invariant prime divisor $W \subseteq V$
such that $\langle \nu_W, \gamma \rangle = -1$ where $\nu_W$ is the $G$-invariant discrete valuation induced by the prime divisor $W$.
\end{definition}

The \emph{localization} of the homogeneous spherical datum $\Ss \coloneqq (\Mm, \Sigma, S^p, \Dm^a)$ at a subset $S_* \subseteq S$
is defined as $\Ss_* \coloneqq (\Mm, \Sigma_*, S^p_*, \Dm^a_*)$ where
\begin{align*}
\Sigma_* &\coloneqq \Sigma \cap \lspan_\Q S_*\text{,} \\
S^p_* &\coloneqq S^p \cap S_*\text{,} \\
\Dm^a_* &\coloneqq \{D \in \Dm^a : \varsigma(D) \cap S_* \ne \emptyset\}\text{.}
\end{align*}
Then $\Ss_*$ is a homogeneous spherical datum for the reductive group $L$
where $L \ltimes P_u$ is the Levi decomposition of the
parabolic subgroup $P_{S_*} \subseteq G$ corresponding to $S_* \subseteq S$.

In the statement of the following Theorem~\ref{th},
we set
\begin{align*}
S_\Fm \coloneqq  \{\alpha \in S : \Dm(\alpha) \subseteq \Fm\}
= S \setminus \bigcup_{D\notin\Fm}\varsigma(D)
\end{align*}
for $\Fm \subseteq \Dm$,
we denote by $\Cm(1)$ the finite set of extremal rays in the cone $\Cm$,
and we write $\Dm$ (resp. $\Dm_*$) for the abstract set of colors determined by 
$\Ss$ (resp. $\Ss_*$) together with the associated map $\rho\colon \Dm \to \Nm$
(resp. $\rho_*\colon \Dm_* \to \Nm$).

\begin{theorem}
\label{th}
Let $G/H$ be a spherical homogeneous space with corresponding
homogeneous spherical datum $\Ss$ and let
$G/H \hookrightarrow X$ be a simple spherical embedding with corresponding
colored cone $(\Cm, \Fm)$. We denote by $\Ss_*$ be the localization of $\Ss$
at $S_\Fm$. Then $X$ is smooth if and only if the following conditions are satisfied:
\begin{enumerate}
\item The cone $\Cm$ is spanned by a part of a $\Z$-basis of $\Nm$ containing
$\rho(\Fm)$ and $\rho|_{\Fm}$ is injective.
\item Every indecomposable component of $\overline{\Ss_*}$ which contains a color appears in the list $\LL$.
\item Let $\gamma_1, \dots, \gamma_r$ be the spherical roots of $\overline{\Ss_*}$ which correspond
to the marked spherical roots in the list $\LL$.
We denote by $\Us \subseteq \Nm$ the set of primitive
generators of the rays in the set $\Cm(1) \setminus \Q_{\ge 0}\rho_*(\Dm_*)$.
There exist pairwise distinct elements $u_1, \dots, u_r \in \Us$
such that for every $1 \le i \le r$ we have $\langle u_i, \gamma_i\rangle = -1$
and $\langle u, \gamma_i\rangle = 0$ for $u \in \Us \setminus \{u_i\}$. If $\gamma$
is any other spherical root, we have $\langle u, \gamma \rangle = 0$ for every $u \in \Us$.
\end{enumerate}
\end{theorem}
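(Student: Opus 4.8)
\section*{Proof proposal}

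The plan is to reduce the smoothness of $X$ to the smoothness of an affine slice at the closed orbit, and then to identify that slice with a multiplicity-free space via Luna's \'etale slice theorem. First I would invoke the local structure theorem (see~\cite{knopsph, ti}). Writing $P \coloneqq P_{S_\Fm}$ with Levi decomposition $P = L \ltimes P_u$, there is an affine $L$-stable slice $Z \subseteq X$ meeting the closed $G$-orbit $Y$ such that the action map $P_u \times Z \to X$ is an open immersion onto the $B$-saturation $X_{B,Y}$ of the open $B$-orbit of $Y$. Since $X$ is simple, $Y$ lies in the closure of every orbit, so $X_{B,Y}$ meets every $G$-orbit and it suffices to check smoothness there; as $P_u$ is an affine space and the map is a product, $X$ is smooth if and only if $Z$ is smooth. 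The slice $Z$ is an affine spherical $L$-variety whose open $L$-orbit has homogeneous spherical datum $\Ss_*$, and $Y \cap Z$ is a single $L$-fixed point $z_0$, because the open $B$-orbit of $Y \cong G/P$ is isomorphic to $P_u$ under the above decomposition. The colored cone of $Z$, regarded inside $\Ss_*$, has underlying cone $\Cm$, so its extremal rays are exactly $\Cm(1)$, split into the color rays $\Q_{\ge 0}\rho_*(\Dm_*)$ and the remaining $G$-invariant rays generated by $\Us$.

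Next I would dispose of condition~(1). As smoothness implies local factoriality, the quoted criterion (see~\cite[5.1]{brsph}) shows that~(1) is necessary; conversely, (1) guarantees that $\Cm$ is spanned by part of a $\Z$-basis of $\Nm$, which is the combinatorial shadow of $Z$ being a locally factorial affine variety and is a prerequisite for the slice to be linearizable. Granting~(1), Luna's \'etale slice theorem (see~\cite{luna-slet}) applies at the fixed point $z_0$: the variety $Z$ is smooth if and only if it is $L$-equivariantly isomorphic to the multiplicity-free space $V \coloneqq T_{z_0}(Z)$. Thus the whole problem becomes the combinatorial question of deciding whether the affine spherical $L$-variety $Z$ is isomorphic to a multiplicity-free space.

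By the Luna-Vust correspondence such an isomorphism is equivalent to an equality of combinatorial invariants, so I would compare $Z$ with the classification of multiplicity-free spaces. Decomposing $V$ into indecomposable multiplicity-free summands corresponds to decomposing $\overline{\Ss_*}$ into indecomposable components, and each summand with nontrivial semisimple part contributes a color together with an entry of $\LL$. The summands with trivial semisimple part are one-dimensional $\C^*$-representations, i.e.~smooth color-free factors isomorphic to $\A^1$, which impose no constraint; this explains why condition~(2) only requires that every indecomposable component of $\overline{\Ss_*}$ \emph{containing a color} appear in $\LL$. Hence~(2) is exactly the statement that each colored component of the slice arises from a genuine multiplicity-free space.

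Finally, condition~(3) matches the colored cone of $Z$ with the affine cone of $V$. For a multiplicity-free space the boundary is a union of $G$-invariant prime divisors $W$, and the cone of $V$ is spanned by the color rays together with the rays $\Q_{\ge 0}\nu_W$; the marked spherical roots of the corresponding entry of $\LL$ are precisely those $\gamma$ admitting such a $W$ with $\langle \nu_W, \gamma \rangle = -1$. Therefore $Z \cong V$ forces the non-color rays $\Us$ of $\Cm(1)$ to be the generators $\nu_W$, matched bijectively with the marked roots $\gamma_1, \dots, \gamma_r$ so that $\langle u_i, \gamma_i \rangle = -1$, the orthogonality relations recording that each such valuation pairs to $-1$ with exactly one marked root and to $0$ with all others, while every unmarked root pairs to $0$ with all of $\Us$; this is precisely~(3). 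I expect the main obstacle to be this last matching: one must track the passage from $\Ss_*$ to its spherical closure $\overline{\Ss_*}$, where some spherical roots get doubled, and verify, using the explicit list $\LL$ with its markings, that the integral structure of the $G$-invariant valuations $\nu_W$ of each multiplicity-free space is reproduced exactly by the rays $\Us$, the value $-1$ rather than $-2$ being the decisive invariant that pins down the non-closed datum $\Ss_*$ inside its spherical closure. Collecting these equivalences yields the theorem.
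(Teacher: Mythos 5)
Your skeleton (local structure theorem, then Luna's slice theorem, then the classification of multiplicity-free spaces) is the same as the paper's, but two of your steps are wrong as stated, and one of them is the crux of the whole theorem. First, the slice $Z$ need not contain an $L$-fixed point: the local structure theorem only gives that the closed $L$-orbit of $Z$ is fixed pointwise by $[L,L]$, so this orbit is a torus $L/L'$ with $[L,L] \subseteq L' \subseteq L$, and it is positive-dimensional whenever $\Cm$ does not span $\Nm_\Q$ (already for $X = G/H$ itself, where $Z$ is a single torus orbit). Hence your claim that $Z$ is smooth if and only if it is isomorphic to the multiplicity-free space $T_{z_0}(Z)$ fails: a smooth $Z$ is in general only a homogeneous bundle $L *_{L'} V$ with $V$ a multiplicity-free $(L')^\circ$-module. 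The paper repairs this by applying the slice theorem to the fiber $Z'$ of $Z \cong L *_{L'} Z'$ at its $L'$-fixed point and then extending the $L'$-action to split off the torus, $Z \cong L/L' \times V$; this torus factor is also the real reason condition (2) constrains only components of $\overline{\Ss_*}$ containing a color (colorless components are $S^p$-pieces of the Dynkin diagram, not \enquote{one-dimensional $\C^*$-representations}, which do not appear in a spherical system at all).

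The decisive gap is in your \enquote{$\Leftarrow$} direction. You invoke the Luna--Vust correspondence to turn conditions (2) and (3) into an isomorphism $Z \cong V$. But Luna--Vust classifies embeddings of a \emph{fixed} homogeneous space: to compare $Z$ with $V$ you must first know their open $L$-orbits are isomorphic, i.e., by Losev's uniqueness theorem \cite{losev-uniq}, that the full homogeneous spherical data coincide --- the lattice $\Mm$, the spherical roots $\Sigma_*$, and $\rho_*$, not just the spherical closures. Conditions (2) and (3) only assert $\overline{\Ss_*} = \overline{\Ss_V}$ together with the pairing behaviour of $\Us$ against $\overline{\Sigma_*}$; this does not determine $\Ss_*$, and in fact $Z$ and $V$ are in general not isomorphic --- they need not even have the same rank (the discrepancy is the $k$ in the paper's $\Rm(Z_c) \cong \Rm(V_c)[T_1, \dots, T_k]$). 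Your closing remark that the value $-1$ versus $-2$ pins down $\Ss_*$ inside $\overline{\Ss_*}$ only rules out doubled marked roots; it recovers neither $\Mm$ nor $\rho_*$. The entire difficulty of the theorem is precisely to show that, once local factoriality (condition (1)) is granted, smoothness depends only on the spherical-closure-level data. The paper does this with a Cox-ring argument: complete $Z$ and $V$ to $Z_c$ and $V_c$ by adding codimension-one orbits, apply Brion's computation of the Cox ring of a spherical variety \cite{brcox} (which depends only on the spherical closure) to obtain $\Rm(Z_c) \cong \Rm(V_c)[T_1, \dots, T_k]$, and transfer smoothness through the characteristic spaces, where local factoriality guarantees that the quotients have trivial isotropy \cite{coxrings}. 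Nothing in your proposal plays this role, so the implication from (2) and (3) to smoothness is unproved. (A smaller gap: you assert, rather than prove, that the slice's homogeneous spherical datum is the localization $\Ss_*$; this is the paper's first proposition and requires Camus' lemma \cite{camus} applied to $Z$ and to the localization of the wonderful embedding of $G/\overline{H}$.)
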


The remaining sections of this
paper are organized as follows. The list $\LL$ 
is given explicitly in Section~\ref{sec:list},
and the proof of Theorem~\ref{th} is given in Section~\ref{sec:proof}.

\section{Luna diagrams of indecomposable multiplicity-free spaces}
\label{sec:list}
Let $G/H$ be a spherical homogeneous space with
corresponding homogeneous spherical datum $(\lspan_\Z \Sigma, \Sigma, S^p, \Dm^a)$,
\ie with corresponding spherical system $(\Sigma, S^p, \Dm^a)$.
If the center of $G$ acts trivially on $G/H$
(which is, for instance, always the case when $H \subseteq G$ is spherically closed),
we have $\Sigma \subseteq \lspan_\N S$, and we may in fact replace $G$
with any other reductive group having the same root system $R$ as $G$.

By extending the Dynkin diagram of $R$ with information from
the spherical system $(\Sigma, S^p, \Dm^a)$,
we obtain its so-called Luna diagram, from
which the spherical system may be recovered.
We recall from \cite[1.2.4]{f4} how this is done.

We begin by
drawing the Dynkin diagram of the root system $R$. Then
we represent the spherical roots in $\Sigma$ as indicated in Table~\ref{tab:sr}. 
\begin{table}[t]
\begin{tabular}{cc}
\toprule
diagram & spherical root
\\
\midrule
\parbox[c][][c]{4cm}{\centering
\begin{picture}(0,2100)(0,-1050)
\put(0,0){\usebox{\aone}}
\end{picture}}
& \parbox[c][][c]{4cm}{\centering $\alpha_1$}\\
\parbox[c][][c]{4cm}{\centering
\begin{picture}(0,2100)(0,-1050)
\put(0,0){\usebox{\aprime}}
\end{picture}}
& \parbox[c][][c]{4cm}{\centering $2\alpha_1$}\\
\parbox[c][][c]{4cm}{\centering
\begin{picture}(1800,2100)(0,-1050)
\multiput(0,0)(1800,0){2}{\usebox{\vertex}}
\multiput(0,0)(1800,0){2}{\usebox{\wcircle}}
\multiput(0,-300)(1800,0){2}{\line(0,-1){600}}
\put(0,-900){\line(1,0){1800}}
\end{picture}}
& \parbox[c][][c]{4cm}{\centering $\alpha_1 + \alpha'_1$}\\
\parbox[c][][c]{4cm}{\centering
\begin{picture}(5400,2100)(0,-1050)
\put(0,0){\usebox{\mediumam}}
\end{picture}}
& \parbox[c][][c]{4cm}{\centering $\alpha_1 + \dots + \alpha_n$}\\
\parbox[c][][c]{4cm}{\centering
\begin{picture}(3600,2100)(0,-1050)
\put(0,0){\usebox{\dthree}}
\end{picture}}
& \parbox[c][][c]{4cm}{\centering $\alpha_1 + 2\alpha_2 + \alpha_3$}\\
\parbox[c][][c]{4cm}{\centering
\begin{picture}(7200,2100)(0,-1050)
\put(0,0){\usebox{\shortbm}}
\end{picture}}
& \parbox[c][][c]{4cm}{\centering $\alpha_1 + \dots + \alpha_n$}\\
\parbox[c][][c]{4cm}{\centering
\begin{picture}(7200,2100)(0,-1050)
\put(0,0){\usebox{\shortbprimem}}
\end{picture}}
& \parbox[c][][c]{4cm}{\centering $2\alpha_1 + \dots + 2\alpha_n$}\\
\parbox[c][][c]{4cm}{\centering
\begin{picture}(3600,2100)(0,-1050)
\put(0,0){\usebox{\bthirdthree}}
\end{picture}}
& \parbox[c][][c]{4cm}{\centering $\alpha_1 + 2\alpha_2 + 3\alpha_3$}\\
\parbox[c][][c]{4cm}{\centering
\begin{picture}(9000,2100)(0,-1050)
\put(0,0){\usebox{\shortcm}}
\end{picture}}
& \parbox[c][][c]{4cm}{\centering $\alpha_1 + 2\alpha_2 + \dots + 2\alpha_{n-1} + \alpha_n$}\\
\parbox[c][][c]{5cm}{\centering
\begin{picture}(6600,2100)(0,-1050)
\put(0,0){\usebox{\shortdm}}
\end{picture}}
& \parbox[c][][c]{5cm}{\centering $2\alpha_1 + \dots + 2\alpha_{n-2} + \alpha_{n-1} + \alpha_n$}\\
\parbox[c][][c]{4cm}{\centering
\begin{picture}(5400,2100)(0,-1050)
\put(0,0){\usebox{\ffour}}
\end{picture}}
& \parbox[c][][c]{4cm}{\centering $\alpha_1 + 2\alpha_2 + 3\alpha_3 + 2\alpha_4$}\\
\parbox[c][][c]{4cm}{\centering
\begin{picture}(1800,2100)(0,-1050)
\put(0,0){\usebox{\gsecondtwo}}
\end{picture}}
& \parbox[c][][c]{4cm}{\centering $\alpha_1 + \alpha_2$}\\
\parbox[c][][c]{4cm}{\centering
\begin{picture}(1800,2100)(0,-1050)
\put(0,0){\usebox{\gtwo}}
\end{picture}}
& \parbox[c][][c]{4cm}{\centering $2\alpha_1 + \alpha_2$}\\
\parbox[c][][c]{4cm}{\centering
\begin{picture}(1800,2100)(0,-1050)
\put(0,0){\usebox{\gprimetwo}}
\end{picture}}
& \parbox[c][][c]{4cm}{\centering $4\alpha_1 + 2\alpha_2$}\\
\bottomrule
\end{tabular}
\caption{}
\label{tab:sr}
\end{table}
We continue by adding a non-shadowed circle around each vertex
corresponding to a simple root not in $S^p$ which
does not yet have any circle around, above, or below itself.
It now only remains to represent the set $\Dm^a$. For this,
we set $\Dm^a(\alpha) = \{D \in \Dm^a : \langle \rho(D), \alpha \rangle = 1\}$ for
$\alpha \in \Sigma \cap S$. The set $\Dm^a(\alpha)$ always contains exactly two elements,
i.e.~$\Dm^a(\alpha) = \{D_\alpha^+, D_\alpha^-\}$ where we 
identify $D_\alpha^+$ with the circle above and $D_\alpha^-$ with the
circle below the vertex corresponding to the simple root $\alpha$. We join by a line those
circles corresponding to the same element in $\Dm^a$.
It is always possible to choose $D_\alpha^+$ and $D_\alpha^-$
such that $\langle \rho(D_\alpha^+), \gamma \rangle \in \{1, 0, -1\}$ for every $\gamma \in \Sigma$.
Then, if $\gamma \in \Sigma$ is a spherical root not orthogonal to 
$\alpha$ and $\langle \rho(D_\alpha^+), \gamma \rangle = -1$, we add an arrow starting from
the circle corresponding to $D_\alpha^+$ pointing towards $\gamma$. This
completes the Luna diagram.

At the end of this section, we provide the Luna diagrams for every entry in the list $\LL$
up to automorphisms of the root system. The spherical roots which
are marked have been marked with the symbol $\gamma$ in the diagrams.
The diagrams have been computed from the information given in \cite{sr-knop}.
In the following Example~\ref{ex:kl}, we illustrate how this is done
for the diagrams (13) and (14) in the list.

\begin{example}
\label{ex:kl}
Consider the entry \enquote{$\Sp_4(\C) \times \GL_n(\C) \text{ on } \C^4 \otimes \C^n$
with $4 \le n$}
in the list of \cite{sr-knop}. The required information
are the lines
\begin{align*}
\text{\enquote{Basic weights: $\omega_1+\omega'_1$, $\omega_2+\omega'_2$, $\omega'_2$,
$\omega_1+\omega'_3$, $\omega_2+\omega'_1+\omega'_3$, $\omega'_4$}}
\end{align*}
and
\begin{align*}
\text{\enquote{Simple reflections of $W_V$: $s_{\varepsilon_1-\varepsilon_2}$, $s_{2\varepsilon_2}$,
$s'_{\varepsilon_1-\varepsilon_2}$, $s'_{\varepsilon_2-\varepsilon_3}$,
$s'_{\varepsilon_3-\varepsilon_4}$}.}
\end{align*}
In \cite{sr-knop}, the fundamental weights of $\Sp_4$ in Bourbaki numbering are denoted by $\omega_k$,
and the highest weight of $\GL_n$ acting on $\Lambda^k(\C^n)$
is denoted by $\omega'_k$.
Moreover, the simple reflections in the Weyl group of $\Sp_4$ (resp.~of $\GL_n$)
are denoted by $s_\alpha$ (resp.~by $s'_\alpha$) where $\alpha$
is a simple root given in the usual Bourbaki $\varepsilon_i$-basis.

In this paper, we replace $\Sp_4 \times \GL_n$ acting 
on $\C^4 \otimes \C^n$ with
$G \coloneqq \SL_n \times \Sp_4 \times \C^*$
acting on $V \coloneqq \C^n \otimes \C^4$.
If we write $\omega_k$ (resp.~$\omega'_k$)
for the fundamental weights of $\SL_n$ (resp.~of $\Sp_4$),
$\alpha_k$ (resp.~$\alpha'_k$) for the corresponding simple roots,
and $\varepsilon$ for the standard character of $\C^*$, we obtain
the correspondence of notation in Table~\ref{tab:not}.

\begin{table}[t]
\begin{tabular}{cc}
\toprule
this paper & \cite{sr-knop} \\
\midrule
$\chi_1 \coloneqq \omega_1+\omega'_1+\varepsilon$ & $\omega_1+\omega'_1$ \\
$\chi_2 \coloneqq \omega_2+\omega'_2+2\varepsilon$ & $\omega_2+\omega'_2$ \\
$\chi_3 \coloneqq \omega_2+2\varepsilon$ & $\omega'_2$  \\
$\chi_4 \coloneqq \omega_3+\omega'_1+3\varepsilon$ & $\omega_1+\omega'_3$ \\
$\chi_5 \coloneqq \omega_1+\omega_3+\omega'_2+4\varepsilon$ & $\omega_2+\omega'_1+\omega'_3$  \\
$\chi_6 \coloneqq \omega_4+4\varepsilon$ (with $\omega_4\coloneqq 0$ for $n=4$) & $\omega'_4$ \\
\midrule
$\alpha_1$ & $\varepsilon_1-\varepsilon_2$\\
$\alpha_2$ & $\varepsilon_2-\varepsilon_3$\\
$\alpha_3$ & $\varepsilon_3-\varepsilon_4$\\
\midrule
$\alpha'_1$ & $\varepsilon_1-\varepsilon_2$\\
$\alpha'_2$ & $2\varepsilon_2$\\
\bottomrule
\end{tabular}
\caption{}
\label{tab:not}
\end{table}

We begin by
drawing the Dynkin diagram of the root system $A_{n-1} \times C_2$.
The \enquote{simple reflections of $W_V$} correspond
to the spherical roots. According to Table~\ref{tab:sr},
we put circles above and below the vertices
corresponding to $\alpha_1, \alpha_2, \alpha_3, \alpha'_1, \alpha'_2$.

The \enquote{basic weights} $\chi_1, \dots, \chi_6$ correspond to the 
$B$-invariant prime divisors $D_1, \dots, D_6$ in $V$, where the prime divisor $D_i$ is $G$-invariant
if and only if $\chi_i$ is a character of $G$.
Hence the prime divisor $D_6$ is $G$-invariant
for $n=4$, and it is (the closure of) a color otherwise.
We have (see, for instance, \cite[Lemma~30.24]{ti})
\begin{align*}
S^p = \{\alpha \in S : \langle \alpha^\vee, \chi_1 \rangle = \dots = \langle \alpha^\vee, \chi_6 \rangle = 0\} =
\{\alpha_5,\alpha_6,\dots,\alpha_{n-1}\}
\end{align*}
in the case $n \ge 5$, so that we add a circle around the vertex corresponding to $\alpha_4$.

We write the spherical roots as
linear combinations of the characters $\chi_1, \dots, \chi_6$
(which, as $V$ has trivial divisor class group, form a basis
of the lattice $\Mm$):
\begin{align*}
\gamma_1 \coloneqq \alpha_1 &= 2\omega_1 - \omega_2 &&= \chi_1-\chi_2-\chi_4+\chi_5\text{,} \\ 
\gamma_2 \coloneqq \alpha_2 &= -\omega_1 +2\omega_2 -\omega_3 &&= \chi_2+\chi_3-\chi_5\text{,} \\
\gamma_3 \coloneqq \alpha_3 &= -\omega_2 +2\omega_3 - \omega_4 &&= -\chi_1-\chi_2+\chi_4+\chi_5-\chi_6\text{,}\\
\gamma_4 \coloneqq \alpha'_1 &= 2\omega'_1 - \omega'_2 &&= \chi_1+\chi_4-\chi_5\text{,} \\
\gamma_5 \coloneqq \alpha'_2 &= -2\omega'_1 + 2\omega'_2 &&= -\chi_1+\chi_2-\chi_3-\chi_4+\chi_5\text{.}
\end{align*}
Then the coefficient of $\chi_i$ in the expression for $\gamma_j$ is
$\nu_{D_i}(f_{\gamma_j})$, which is
$\langle \rho(D_i), \gamma_j\rangle$ if $D_i$ is (the closure of) a color
and $\langle \nu_{D_6}, \gamma_j\rangle$ with $\nu_{D_6} \in \Vm$ for $n=4$.
We may therefore assign the elements of $\Dm^a$ to the circles
as follows:
\begin{align*}
&D_{\alpha_1}^+ \coloneqq D_{\alpha'_1}^+ \coloneqq D_1\text{,} &&
D_{\alpha_1}^- \coloneqq D_{\alpha_3}^- \coloneqq D_{\alpha'_2}^- \coloneqq D_5\text{,} \\
&D_{\alpha_2}^+ \coloneqq D_{\alpha'_2}^+ \coloneqq D_2\text{,} &&
D_{\alpha_2}^- \coloneqq D_3\text{,} \\
&D_{\alpha_3}^+ \coloneqq D_{\alpha'_1}^- \coloneqq D_4\text{.}
\end{align*}
After joining the corresponding circles with lines,
we have to check where arrows have to be added
to the circles above a vertex.
We have
\begin{align*}
\langle D_{\alpha_2}^+, \alpha_1 \rangle =
\langle D_{\alpha_2}^+, \alpha_3 \rangle
= \langle D_{\alpha'_1}^+, \alpha'_2 \rangle = -1\text{,}
\end{align*}
so that we add the appropriate arrows to the diagram.
Finally, in the case $n=4$ we have $\langle \nu_{D_6}, \gamma_3\rangle = -1$,
so that the spherical root $\gamma_3$ is marked, and hence we add
the symbol $\gamma$ near the spherical root $\gamma_3$, \ie under
the vertex corresponding to $\alpha_3 = \gamma_3$.
\end{example}

\subsection*{Luna diagrams for the entries in the list $\LL$}
\begin{enumerate}
\item $\SL_n \times \C^* \acts V_{\omega_1} \cong \C^n$ for $n \ge 2$.
\begin{align*}
\begin{picture}(5400,1800)(0,-900)
\put(0,0){\usebox{\edge}}
\put(1800,0){\usebox{\shortsusp}}
\put(3600,0){\usebox{\edge}}
\put(0,0){\usebox{\wcircle}}
\end{picture}
\end{align*}
\item $\Sp_{2n} \times \C^* \acts V_{\omega_1} \cong \C^{2n}$ for $n \ge 2$.
\begin{align*}
\begin{picture}(7200,1800)(0,-900)
\put(0,0){\usebox{\edge}}
\put(1800,0){\usebox{\shortsusp}}
\put(3600,0){\usebox{\edge}}
\put(5400,0){\usebox{\leftbiedge}}
\put(0,0){\usebox{\wcircle}}
\end{picture}
\end{align*}
\item $\Spin_{2n+1} \times \C^* \acts V_{\omega_1} \cong \C^{2n+1}$ for $n \ge 2$.
\begin{align*}
\begin{picture}(7200,1800)(0,-900)
\put(0,0){\usebox{\shortbprimem}}
\put(-200,-690){\tiny $\gamma$}
\end{picture}
\end{align*}
\item $\Spin_{2n} \times \C^* \acts  V_{\omega_1} \cong \C^{2n}$ for $n \ge 3$.
\begin{align*}
\begin{picture}(6600,2400)(0,-1200)
\put(0,0){\usebox{\shortdm}}
\put(-200,-690){\tiny $\gamma$}
\end{picture}
\end{align*}
\item $\SL_n \times \C^* \acts V_{2\omega_1} \cong S^2(\C^n)$ for $n \ge 2$.
\begin{align*}
\begin{picture}(5400,1800)(0,-900)
\put(0,0){\usebox{\edge}}
\put(1800,0){\usebox{\shortsusp}}
\put(3600,0){\usebox{\edge}}
\multiput(0,0)(1800,0){4}{\usebox{\aprime}}
\put(5200,-690){\tiny $\gamma$}
\end{picture}
\end{align*}
\item $\SL_n \times \C^* \acts V_{\omega_2} \cong \Lambda^2(\C^n)$ for $n \ge 5$ odd.
\begin{align*}
\begin{picture}(14400,1800)(0,-900)
\multiput(0,0)(3600,0){2}{\usebox{\dthree}}
\put(7200,0){\usebox{\shortsusp}}
\put(9000,0){\usebox{\dthree}}
\put(12600,0){\usebox{\edge}}
\put(14400,0){\usebox{\wcircle}}
\end{picture}
\end{align*}
\item $\SL_n \times \C^* \acts V_{\omega_2} \cong \Lambda^2(\C^n)$ for $n \ge 6$ even.
\begin{align*}
\begin{picture}(12600,1800)(0,-900)
\multiput(0,0)(3600,0){2}{\usebox{\dthree}}
\put(7200,0){\usebox{\shortsusp}}
\put(9000,0){\usebox{\dthree}}
\put(10600,-690){\tiny $\gamma$}
\end{picture}
\end{align*}
\item $\SL_n \times \SL_{n'} \times \C^* \acts V_{\omega_1 + \omega'_1} \cong \C^n \otimes \C^{n'}$ for $n'=n\ge 2$.
\begin{align*}
\begin{picture}(5400,3600)(0,-900)
\put(0,0){\usebox{\edge}}
\put(0,0){\usebox{\wcircle}}
\put(1800,0){\usebox{\shortsusp}}
\put(1800,0){\usebox{\wcircle}}
\put(3600,0){\usebox{\edge}}
\put(3600,0){\usebox{\wcircle}}
\put(5400,0){\usebox{\wcircle}}
\put(0,1800){\usebox{\edge}}
\put(0,1800){\usebox{\wcircle}}
\put(1800,1800){\usebox{\shortsusp}}
\put(1800,1800){\usebox{\wcircle}}
\put(3600,1800){\usebox{\edge}}
\put(3600,1800){\usebox{\wcircle}}
\put(5400,1800){\usebox{\wcircle}}
\put(0,300){\line(0,1){1200}}
\put(1800,300){\line(0,1){1200}}
\put(3600,300){\line(0,1){1200}}
\put(5400,300){\line(0,1){1200}}
\put(5500,800){\tiny $\gamma$}
\end{picture}
\end{align*}
\item $\SL_n \times \SL_{n'} \times \C^* \acts V_{\omega_1 + \omega'_1} \cong \C^n \otimes \C^{n'}$ for $n'> n \ge2$.
\begin{align*}
\begin{picture}(12600,3600)(0,-900)
\put(0,0){\usebox{\edge}}
\put(0,0){\usebox{\wcircle}}
\put(1800,0){\usebox{\shortsusp}}
\put(1800,0){\usebox{\wcircle}}
\put(3600,0){\usebox{\edge}}
\put(3600,0){\usebox{\wcircle}}
\put(5400,0){\usebox{\edge}}
\put(5400,0){\usebox{\wcircle}}
\put(7200,0){\usebox{\edge}}
\put(7200,0){\usebox{\wcircle}}
\put(9000,0){\usebox{\shortsusp}}
\put(10800,0){\usebox{\edge}}
\put(0,1800){\usebox{\edge}}
\put(0,1800){\usebox{\wcircle}}
\put(1800,1800){\usebox{\shortsusp}}
\put(1800,1800){\usebox{\wcircle}}
\put(3600,1800){\usebox{\edge}}
\put(3600,1800){\usebox{\wcircle}}
\put(5400,1800){\usebox{\wcircle}}
\put(0,300){\line(0,1){1200}}
\put(1800,300){\line(0,1){1200}}
\put(3600,300){\line(0,1){1200}}
\put(5400,300){\line(0,1){1200}}
\end{picture}
\end{align*}
\item $\SL_2 \times \Sp_{2n'} \times \C^* \acts V_{\omega_1 + \omega'_1} \cong \C^2 \otimes \C^{2n'}$ for $n' \ge 2$.
\begin{align*}
\begin{picture}(10800,1800)(0,-900)
\put(0,0){\usebox{\vertex}}
\put(1800,0){\usebox{\shortcm}}
\put(0,0){\usebox{\wcircle}}
\put(1800,0){\usebox{\wcircle}}
\multiput(0,-300)(1800,0){2}{\line(0,-1){600}}
\put(0,-900){\line(1,0){1800}}
\put(3400,-690){\tiny $\gamma$}
\end{picture}
\end{align*}
\item $\SL_3 \times \Sp_4 \times \C^* \acts V_{\omega_1+\omega'_1} \cong \C^3 \otimes \C^4$.
\begin{align*}
\begin{picture}(5400,3600)(0,-1800)
\put(0,0){\usebox{\edge}}
\put(3600,0){\usebox{\vertex}}
\put(3600,0){\usebox{\leftbiedge}}
\multiput(0,0)(1800,0){4}{\usebox{\aone}}
\put(1800,600){\usebox{\tow}}
\put(3600,600){\usebox{\toe}}
\multiput(0,-900)(5400,0){2}{\line(0,-1){600}}
\put(0,-1500){\line(1,0){5400}}
\multiput(1800,900)(3600,0){2}{\line(0,1){600}}
\put(1800,1500){\line(1,0){3600}}
\multiput(0,900)(3600,0){2}{\line(0,1){300}}
\multiput(0,1200)(1900,0){2}{\line(1,0){1700}}
\end{picture}
\end{align*}
\item $\SL_3 \times \Sp_{2n'} \times \C^* \acts V_{\omega_1+\omega'_1} \cong \C^3 \otimes \C^{2n'}$ for $n' \ge 3$.
\begin{align*}
\begin{picture}(14400,3600)(0,-1800)
\put(0,0){\usebox{\edge}}
\put(3600,0){\usebox{\edge}}
\multiput(0,0)(1800,0){4}{\usebox{\aone}}
\put(5400,0){\usebox{\shortcm}}
\put(1800,600){\usebox{\tow}}
\put(3600,600){\usebox{\toe}}
\multiput(0,-900)(5400,0){2}{\line(0,-1){600}}
\put(0,-1500){\line(1,0){5400}}
\multiput(1800,900)(3600,0){2}{\line(0,1){600}}
\put(1800,1500){\line(1,0){3600}}
\multiput(0,900)(3600,0){2}{\line(0,1){300}}
\multiput(0,1200)(1900,0){2}{\line(1,0){1700}}
\end{picture}
\end{align*}
\item $\SL_4 \times \Sp_{4} \times \C^* \acts V_{\omega_1+\omega'_1} \cong \C^4 \otimes \C^4$.
\begin{align*}
\begin{picture}(7200,3600)(0,-1800)
\put(0,0){\usebox{\edge}}
\put(1800,0){\usebox{\edge}}
\multiput(0,0)(1800,0){5}{\usebox{\aone}}
\put(1800,600){\usebox{\tow}}
\put(1800,600){\usebox{\toe}}
\put(5400,0){\usebox{\vertex}}
\put(5400,0){\usebox{\leftbiedge}}
\put(5400,600){\usebox{\toe}}
\multiput(0,-900)(3600,0){3}{\line(0,-1){600}}
\put(0,-1500){\line(1,0){7200}}
\multiput(1800,900)(5400,0){2}{\line(0,1){600}}
\put(1800,1500){\line(1,0){5400}}
\multiput(0,900)(5400,0){2}{\line(0,1){300}}
\put(0,1200){\line(1,0){1700}}
\put(1900,1200){\line(1,0){3500}}
\put(3400,-690){\tiny $\gamma$}
\multiput(3900,600)(600,-1200){2}{\line(1,0){600}}
\put(4500,600){\line(0,-1){1200}}
\end{picture}
\end{align*}
\item $\SL_n \times \Sp_{4} \times \C^* \acts V_{\omega_1+\omega'_1} \cong \C^n \otimes \C^4$ for $n \ge 5$.
\begin{align*}
\begin{picture}(14400,3600)(0,-1800)
\put(0,0){\usebox{\edge}}
\put(1800,0){\usebox{\edge}}
\multiput(0,0)(1800,0){3}{\usebox{\aone}}
\multiput(12600,0)(1800,0){2}{\usebox{\aone}}
\put(1800,600){\usebox{\tow}}
\put(1800,600){\usebox{\toe}}
\put(12600,0){\usebox{\vertex}}
\put(12600,0){\usebox{\leftbiedge}}
\put(12600,600){\usebox{\toe}}
\multiput(0,-900)(3600,0){2}{\line(0,-1){600}}
\put(14400,-900){\line(0,-1){600}}
\put(0,-1500){\line(1,0){14400}}
\multiput(1800,900)(12600,0){2}{\line(0,1){600}}
\put(1800,1500){\line(1,0){12600}}
\multiput(0,900)(12600,0){2}{\line(0,1){300}}
\put(0,1200){\line(1,0){1700}}
\put(1900,1200){\line(1,0){10700}}
\put(3600,0){\usebox{\edge}}
\put(5400,0){\usebox{\wcircle}}
\put(5400,0){\usebox{\edge}}
\put(7200,0){\usebox{\shortsusp}}
\put(9000,0){\usebox{\edge}}
\put(3900,600){\line(1,0){7800}}
\put(11700,-600){\line(1,0){600}}
\put(11700,600){\line(0,-1){1200}}
\end{picture}
\end{align*}
\item $\Spin_7 \times \C^* \acts V_{\omega_3} \cong \C^8$.
\begin{align*}
\begin{picture}(3600,1800)(0,-900)
\put(0,0){\usebox{\bthirdthree}}
\put(3400,-690){\tiny $\gamma$}
\end{picture}
\end{align*}
\item $\Spin_9 \times \C^* \acts V_{\omega_4} \cong \C^{16}$.
\begin{align*}
\begin{picture}(5400,1800)(0,-900)
\put(0,0){\usebox{\edge}}
\put(0,0){\usebox{\afournolines}}
\put(1800,0){\usebox{\bthirdthree}}
\put(2400,600){\tiny $\gamma$}
\end{picture}
\end{align*}
\item $\Spin_{10} \times \C^* \acts V_{\omega_5} \cong \C^{16}$.
\begin{align*}
\begin{picture}(4800,2400)(0,-1200)
\put(0,0){\usebox{\dynkindfive}}
\put(4800,-1200){\usebox{\gcircle}}
\put(0,0){\usebox{\wcircle}}
\end{picture}
\end{align*}
\item $G_2 \times \C^* \acts V_{\omega_1} \cong \C^7$.
\begin{align*}
\begin{picture}(1800,1800)(0,-900)
\put(0,0){\usebox{\gprimetwo}}
\put(-200,-690){\tiny $\gamma$}
\end{picture}
\end{align*}
\item $E_6 \times \C^* \acts V_{\omega_1} \cong \C^{27}$.
\begin{align*}
\begin{picture}(7200,3600)(0,-2700)
\multiput(0,0)(1800,0){4}{\usebox{\edge}}
\put(3600,0){\usebox{\vedge}}
\put(0,0){\usebox{\gcircle}}
\put(7200,0){\usebox{\gcircle}}
\put(7000,-690){\tiny $\gamma$}
\end{picture}
\end{align*}
\item $\Spin_8 \times (\C^*)^2 \acts V_{\omega_3} \oplus V_{\omega_4} \cong \C^8 \oplus \C^8$.
\begin{align*}
\begin{picture}(3000,2400)(0,-1200)
\put(0,0){\usebox{\athreene}}
\put(0,0){\usebox{\athreese}}
\put(1800,0){\usebox{\athreebifurc}}
\put(1400,600){\tiny $\gamma$}
\put(1400,-750){\tiny $\gamma$}
\end{picture}
\end{align*}
\item $\SL_2 \times (\C^*)^2 \acts V_{\omega_1} \oplus V_{\omega_1} \cong \C^2 \oplus \C^2$.
\begin{align*}
\begin{picture}(0,1800)(0,-900)
\put(0,0){\usebox{\aone}}
\put(-200,-690){\tiny $\gamma$}
\end{picture}
\end{align*}
\item $\SL_n \times (\C^*)^2 \acts V_{\omega_1} \oplus V_{\omega_1} \cong \C^n \oplus \C^n$
for $n \ge 3$.
\begin{align*}
\begin{picture}(7200,1800)(0,-900)
\put(0,0){\usebox{\edge}}
\put(0,0){\usebox{\aone}}
\put(1800,0){\usebox{\edge}}
\put(1800,0){\usebox{\wcircle}}
\put(5400,0){\usebox{\edge}}
\put(3600,0){\usebox{\shortsusp}}
\end{picture}
\end{align*}
\item $\SL_n \times (\C^*)^2 \acts V_{\omega_1} \oplus V_{\omega_{n-1}} \cong (\C^n) \oplus (\C^n)^*$ for $n \ge 3$.
\begin{align*}
\begin{picture}(5400,1800)(0,-900)
\put(0,0){\usebox{\mediumam}}
\put(2400,600){\tiny $\gamma$}
\end{picture}
\end{align*}
\item $\SL_n \times (\C^*)^2 \acts V_{\omega_1} \oplus V_{\omega_2} \cong \C^n \oplus \Lambda^2(\C^n)$ for $n \ge 4$. 
\begin{align*}
\begin{picture}(7200,1800)(0,-900)
\put(0,0){\usebox{\atwoseq}}
\put(6000,600){\tiny $\gamma$}
\end{picture}
\end{align*}
\item $\SL_n \times (\C^*)^2 \acts V_{\omega_{n-1}} \oplus V_{\omega_2} \cong (\C^n)^* \oplus \Lambda^2(\C^n)$ for $n \ge 4$ even.
\begin{align*}
\begin{picture}(9000,1800)(0,-900)
\put(0,0){\usebox{\atwoseq}}
\put(7200,0){\usebox{\atwo}}
\put(6000,600){\tiny $\gamma$}
\end{picture}
\end{align*}
\item $\SL_n \times (\C^*)^2 \acts V_{\omega_{n-1}} \oplus V_{\omega_2} \cong (\C^n)^* \oplus \Lambda^2(\C^n)$ for $n \ge 5$ odd.
\begin{align*}
\begin{picture}(9000,1800)(0,-900)
\put(0,0){\usebox{\atwoseq}}
\put(7200,9){\usebox{\edge}}
\put(9000,9){\usebox{\aone}}
\end{picture}
\end{align*}
\item $\SL_n \times \SL_{n'} \times (\C^*)^2 \acts V_{\omega_1+\omega'_1} \oplus V_{\omega'_1} \cong (\C^n \otimes \C^{n'}) \oplus \C^{n'}$ for $2 \le n < n'-1$.
\begin{align*}
\begin{picture}(14400,5400)(0,-2700)
\put(0,0){\usebox{\doublebegin}}
\put(0,1200){\usebox{\edge}}
\put(0,-1200){\usebox{\edge}}
\put(1800,0){\usebox{\double}}
\put(1800,1200){\usebox{\shortsusp}}
\put(1800,-1200){\usebox{\shortsusp}}
\put(3600,0){\usebox{\doublesusp}}
\put(3600,1200){\usebox{\edge}}
\put(3600,-1200){\usebox{\edge}}
\put(5400,0){\usebox{\double}}
\put(5400,-1200){\usebox{\edge}}
\put(7200,0){\usebox{\doubleendlong}}
\put(7200,-1200){\usebox{\edge}}
\put(9000,-1200){\usebox{\wcircle}}
\put(9000,-1200){\usebox{\edge}}
\put(10800,-1200){\usebox{\shortsusp}}
\put(12600,-1200){\usebox{\edge}}
\multiput(0,1800)(1800,0){3}{\usebox{\toe}}
\multiput(0,-600)(1800,0){4}{\usebox{\toe}}
\end{picture}
\end{align*}
\item $\SL_n \times \SL_{n'} \times (\C^*)^2 \acts V_{\omega_1+\omega'_1} \oplus V_{\omega'_1} \cong (\C^n \otimes \C^{n'}) \oplus \C^{n'}$ for $2 \le n = n'-1$.
\begin{align*}
\begin{picture}(7200,5400)(0,-2700)
\put(0,0){\usebox{\doublebegin}}
\put(0,1200){\usebox{\edge}}
\put(0,-1200){\usebox{\edge}}
\put(1800,0){\usebox{\double}}
\put(1800,1200){\usebox{\shortsusp}}
\put(1800,-1200){\usebox{\shortsusp}}
\put(3600,0){\usebox{\doublesusp}}
\put(3600,1200){\usebox{\edge}}
\put(3600,-1200){\usebox{\edge}}
\put(5400,0){\usebox{\double}}
\put(5400,-1200){\usebox{\edge}}
\put(7200,0){\usebox{\doubleendlong}}
\multiput(0,1800)(1800,0){3}{\usebox{\toe}}
\multiput(0,-600)(1800,0){4}{\usebox{\toe}}
\put(7000,-1890){\tiny $\gamma$}
\end{picture}
\end{align*}
\item $\SL_n \times \SL_{n'} \times (\C^*)^2 \acts V_{\omega_1+\omega'_1} \oplus V_{\omega'_1} \cong (\C^n \otimes \C^{n'}) \oplus \C^{n'}$ for $2 \le n = n'$.
\begin{align*}
\begin{picture}(5400,5400)(0,-2700)
\put(0,0){\usebox{\doublebegin}}
\put(0,1200){\usebox{\edge}}
\put(0,-1200){\usebox{\edge}}
\put(1800,0){\usebox{\double}}
\put(1800,1200){\usebox{\shortsusp}}
\put(1800,-1200){\usebox{\shortsusp}}
\put(3600,0){\usebox{\doublesusp}}
\put(3600,1200){\usebox{\edge}}
\put(3600,-1200){\usebox{\edge}}
\put(5400,0){\usebox{\doublenotop}}
\put(5200,510){\tiny $\gamma$}
\multiput(0,1800)(1800,0){3}{\usebox{\toe}}
\multiput(0,-600)(1800,0){3}{\usebox{\toe}}
\end{picture}
\end{align*}
\item $\SL_n \times \SL_{n'} \times (\C^*)^2 \acts V_{\omega_1+\omega'_1} \oplus V_{\omega'_1} \cong (\C^n \otimes \C^{n'}) \oplus \C^{n'}$ for $n > n' \ge 2$.
\begin{align*}
\begin{picture}(12600,5400)(0,-2700)
\put(0,0){\usebox{\doublebegin}}
\put(0,1200){\usebox{\edge}}
\put(0,-1200){\usebox{\edge}}
\put(1800,0){\usebox{\double}}
\put(1800,1200){\usebox{\shortsusp}}
\put(1800,-1200){\usebox{\shortsusp}}
\put(3600,0){\usebox{\doublesusp}}
\put(3600,1200){\usebox{\edge}}
\put(3600,-1200){\usebox{\edge}}
\put(5400,0){\usebox{\doublenotop}}
\put(5400,1200){\usebox{\edge}}
\put(7200,1200){\usebox{\wcircle}}
\put(7200,1200){\usebox{\edge}}
\put(9000,1200){\usebox{\shortsusp}}
\put(10800,1200){\usebox{\edge}}
\multiput(0,1800)(1800,0){3}{\usebox{\toe}}
\multiput(0,-600)(1800,0){3}{\usebox{\toe}}
\end{picture}
\end{align*}
\item $\SL_n \times \SL_{n'} \times (\C^*)^2 \acts 
V_{\omega_1+\omega'_1} \oplus V_{\omega'_{n-1}} \cong (\C^n \otimes \C^{n'}) \oplus (\C^{n'})^*$\\ for $2 \le n < n'-1$.
\begin{align*}
\begin{picture}(12600,5400)(0,-2700)
\put(0,0){\usebox{\rdoublebegin}}
\put(0,1200){\usebox{\edge}}
\put(0,-1200){\usebox{\edge}}
\put(1800,0){\usebox{\rdouble}}
\put(1800,1200){\usebox{\shortsusp}}
\put(1800,-1200){\usebox{\shortsusp}}
\put(3600,0){\usebox{\rdoublesusp}}
\put(3600,1200){\usebox{\edge}}
\put(3600,-1200){\usebox{\edge}}
\put(5400,0){\usebox{\rdoublenobot}}
\put(5400,-1200){\usebox{\edge}}
\put(7200,-1200){\usebox{\mediumam}}
\multiput(1800,1800)(1800,0){3}{\usebox{\tow}}
\multiput(1800,-600)(1800,0){3}{\usebox{\tow}}
\end{picture}
\end{align*}
\item $\SL_n \times \SL_{n'} \times (\C^*)^2 \acts 
V_{\omega_1+\omega'_1} \oplus V_{\omega'_{n-1}} \cong (\C^n \otimes \C^{n'}) \oplus (\C^{n'})^*$\\ for $2 \le n = n'-1$.
\begin{align*}
\begin{picture}(7200,5400)(0,-2700)
\put(0,0){\usebox{\rdoublebegin}}
\put(0,1200){\usebox{\edge}}
\put(0,-1200){\usebox{\edge}}
\put(1800,0){\usebox{\rdouble}}
\put(1800,1200){\usebox{\shortsusp}}
\put(1800,-1200){\usebox{\shortsusp}}
\put(3600,0){\usebox{\rdoublesusp}}
\put(3600,1200){\usebox{\edge}}
\put(3600,-1200){\usebox{\edge}}
\put(5400,0){\usebox{\rdoublenobot}}
\put(5400,-1200){\usebox{\edge}}
\put(7200,-1200){\usebox{\aone}}
\multiput(1800,1800)(1800,0){3}{\usebox{\tow}}
\multiput(1800,-600)(1800,0){4}{\usebox{\tow}}
\end{picture}
\end{align*}
\item $\SL_n \times \SL_{n'} \times (\C^*)^2 \acts 
V_{\omega_1+\omega'_1} \oplus V_{\omega'_{n-1}} \cong (\C^n \otimes \C^{n'}) \oplus (\C^{n'})^*$ for $2 \le n = n'$.
\begin{align*}
\begin{picture}(5400,5400)(0,-2700)
\put(0,0){\usebox{\rdoublebegin}}
\put(0,1200){\usebox{\edge}}
\put(0,-1200){\usebox{\edge}}
\put(1800,0){\usebox{\rdouble}}
\put(1800,1200){\usebox{\shortsusp}}
\put(1800,-1200){\usebox{\shortsusp}}
\put(3600,0){\usebox{\rdoublesusp}}
\put(3600,1200){\usebox{\edge}}
\put(3600,-1200){\usebox{\edge}}
\put(5400,0){\usebox{\rdoublenobot}}
\multiput(1800,1800)(1800,0){3}{\usebox{\tow}}
\multiput(1800,-600)(1800,0){3}{\usebox{\tow}}
\put(5200,510){\tiny $\gamma$}
\end{picture}
\end{align*}
\item $\SL_n \times \SL_{n'} \times (\C^*)^2 \acts 
V_{\omega_1+\omega'_1} \oplus V_{\omega'_{n-1}} \cong (\C^n \otimes \C^{n'}) \oplus (\C^{n'})^*$ for $n > n' \ge 2$.
\begin{align*}
\begin{picture}(12600,5400)(0,-2700)
\put(0,0){\usebox{\rdoublebegin}}
\put(0,1200){\usebox{\edge}}
\put(0,-1200){\usebox{\edge}}
\put(1800,0){\usebox{\rdouble}}
\put(1800,1200){\usebox{\shortsusp}}
\put(1800,-1200){\usebox{\shortsusp}}
\put(3600,0){\usebox{\rdoublesusp}}
\put(3600,1200){\usebox{\edge}}
\put(3600,-1200){\usebox{\edge}}
\put(5400,0){\usebox{\rdoublenobot}}
\put(5400,1200){\usebox{\edge}}
\put(7200,1200){\usebox{\wcircle}}
\put(7200,1200){\usebox{\edge}}
\put(9000,1200){\usebox{\shortsusp}}
\put(10800,1200){\usebox{\edge}}
\multiput(1800,1800)(1800,0){3}{\usebox{\tow}}
\multiput(1800,-600)(1800,0){3}{\usebox{\tow}}
\end{picture}
\end{align*}
\item $\SL_2 \times \SL_2 \times \SL_2 \times (\C^*)^2 \acts 
V_{\omega_1+\omega'_1} \oplus V_{\omega'_1+\omega''_1} \cong (\C^2 \otimes \C^2) \oplus (\C^2 \otimes \C^2)$.
\begin{align*}
\begin{picture}(5400,3600)(0,-1800)
\put(0,0){\usebox{\aone}}
\put(1800,0){\usebox{\aone}}
\put(3600,0){\usebox{\aone}}
\multiput(0,-900)(3600,0){2}{\line(0,-1){600}}
\put(0,-1500){\line(1,0){3600}}
\multiput(0,900)(1800,0){2}{\line(0,1){600}}
\put(0,1500){\line(1,0){1800}}
\multiput(2100,-600)(600,1200){2}{\line(1,0){600}}
\put(2700,600){\line(0,-1){1200}}
\put(3400,-690){\tiny $\gamma$}
\put(-200,-690){\tiny $\gamma$}
\end{picture}
\end{align*}
\item $\SL_n \times \SL_2 \times \SL_2 \times (\C^*)^2 \acts 
V_{\omega_1+\omega'_1} \oplus V_{\omega'_1+\omega''_1} \cong (\C^n \otimes \C^2) \oplus (\C^2 \otimes \C^2)$\\ for $n\ge3$.
\begin{align*}
\begin{picture}(10800,3600)(0,-1800)
\put(0,0){\usebox{\aone}}
\put(0,0){\usebox{\edge}}
\put(1800,0){\usebox{\edge}}
\put(3600,0){\usebox{\shortsusp}}
\put(1800,0){\usebox{\wcircle}}
\put(5400,0){\usebox{\edge}}
\put(9000,0){\usebox{\aone}}
\put(10800,0){\usebox{\aone}}
\multiput(0,-900)(10800,0){2}{\line(0,-1){600}}
\put(0,-1500){\line(1,0){10800}}
\multiput(0,900)(9000,0){2}{\line(0,1){600}}
\put(0,1500){\line(1,0){9000}}
\multiput(9300,-600)(600,1200){2}{\line(1,0){600}}
\put(9900,600){\line(0,-1){1200}}
\put(10600,-690){\tiny $\gamma$}
\end{picture}
\end{align*}
\item $\SL_n \times \SL_2 \times \SL_{n''} \times (\C^*)^2 \acts 
V_{\omega_1+\omega'_1} \oplus V_{\omega'_1+\omega''_1} \cong (\C^n \otimes \C^2) \oplus (\C^2 \oplus \C^{n''})$\\
for $n \ge n'' \ge 3$
\begin{align*}
\begin{picture}(18000,3600)(0,-1800)
\put(0,0){\usebox{\aone}}
\put(0,0){\usebox{\edge}}
\put(1800,0){\usebox{\edge}}
\put(3600,0){\usebox{\shortsusp}}
\put(1800,0){\usebox{\wcircle}}
\put(5400,0){\usebox{\edge}}
\put(9000,0){\usebox{\aone}}
\put(10800,0){\usebox{\aone}}
\put(10800,0){\usebox{\edge}}
\put(12600,0){\usebox{\wcircle}}
\put(12600,0){\usebox{\edge}}
\put(14400,0){\usebox{\shortsusp}}
\put(16200,0){\usebox{\edge}}
\multiput(0,-900)(10800,0){2}{\line(0,-1){600}}
\put(0,-1500){\line(1,0){10800}}
\multiput(0,900)(9000,0){2}{\line(0,1){600}}
\put(0,1500){\line(1,0){9000}}
\multiput(9300,-600)(600,1200){2}{\line(1,0){600}}
\put(9900,600){\line(0,-1){1200}}
\end{picture}
\end{align*}
\item $\Sp_{2n} \times (\C^*)^2 \acts V_{\omega_1} \oplus V_{\omega_1} \cong \C^{2n} \oplus \C^{2n}$ for $n \ge 2$.
\begin{align*}
\begin{picture}(9000,1800)(0,-900)
\put(0,0){\usebox{\shortcm}}
\put(0,0){\usebox{\aone}}
\put(1600,-690){\tiny $\gamma$}
\end{picture}
\end{align*}
\item $\Sp_{2n} \times \SL_2 \times (\C^*)^2 \acts
V_{\omega_1+\omega'_1} \oplus V_{\omega'_1} \cong (\C^{2n} \otimes \C^2) \oplus \C^2$ for $n\ge 2$.
\begin{align*}
\begin{picture}(10800,3600)(0,-1800)
\put(0,0){\usebox{\aone}}
\put(0,0){\usebox{\shortcm}}
\put(10800,0){\usebox{\aone}}
\multiput(0,900)(10800,0){2}{\line(0,1){600}}
\put(0,1500){\line(1,0){10800}}
\put(1600,-690){\tiny $\gamma$}
\end{picture}
\end{align*}
\item $\Sp_{2n} \times \SL_2 \times \SL_2 \times (\C^*)^2 \acts
V_{\omega_1+\omega'_1} \oplus V_{\omega'_1+\omega''_1} \cong (\C^{2n} \otimes \C^2) \oplus (\C^2 \otimes \C^2)$\\ for $n\ge2$.
\begin{align*}
\begin{picture}(12600,3600)(0,-1800)
\put(0,0){\usebox{\aone}}
\put(0,0){\usebox{\shortcm}}
\put(10800,0){\usebox{\aone}}
\put(12600,0){\usebox{\aone}}
\multiput(0,-900)(12600,0){2}{\line(0,-1){600}}
\put(0,-1500){\line(1,0){12600}}
\multiput(0,900)(10800,0){2}{\line(0,1){600}}
\put(0,1500){\line(1,0){10800}}
\multiput(11100,-600)(600,1200){2}{\line(1,0){600}}
\put(11700,600){\line(0,-1){1200}}
\put(1600,-690){\tiny $\gamma$}
\put(12400,-690){\tiny $\gamma$}
\end{picture}
\end{align*}
\item $\Sp_{2n} \times \SL_2 \times \SL_{n''} \times (\C^*)^2 \acts
V_{\omega_1+\omega'_1} \oplus V_{\omega'_1+\omega''_1} \cong (\C^{2n} \otimes \C^2) \oplus (\C^2 \otimes \C^{n''})$\\ for $n\ge 2$
and $n'' \ge 3$.
\begin{align*}
\begin{picture}(19800,3600)(0,-1800)
\put(0,0){\usebox{\aone}}
\put(0,0){\usebox{\shortcm}}
\put(10800,0){\usebox{\aone}}
\put(12600,0){\usebox{\aone}}
\put(12600,0){\usebox{\edge}}
\put(14400,0){\usebox{\wcircle}}
\put(14400,0){\usebox{\edge}}
\put(16200,0){\usebox{\shortsusp}}
\put(18000,0){\usebox{\edge}}
\multiput(0,-900)(12600,0){2}{\line(0,-1){600}}
\put(0,-1500){\line(1,0){12600}}
\multiput(0,900)(10800,0){2}{\line(0,1){600}}
\put(0,1500){\line(1,0){10800}}
\multiput(11100,-600)(600,1200){2}{\line(1,0){600}}
\put(11700,600){\line(0,-1){1200}}
\put(1600,-690){\tiny $\gamma$}
\end{picture}
\end{align*}
\item $\Sp_{2n} \times \SL_2 \times \Sp_{2n''} \times (\C^*)^2 \acts
V_{\omega_1+\omega'_1} \oplus V_{\omega'_1+\omega''_1} \cong (\C^{2n} \otimes \C^2) \oplus (\C^2 \otimes \C^{2n''})$\\ for $n,n''\ge 2$.
\begin{align*}
\begin{picture}(21600,3600)(0,-1800)
\put(0,0){\usebox{\aone}}
\put(0,0){\usebox{\shortcm}}
\put(10800,0){\usebox{\aone}}
\put(12600,0){\usebox{\aone}}
\put(12600,0){\usebox{\shortcm}}
\multiput(0,-900)(12600,0){2}{\line(0,-1){600}}
\put(0,-1500){\line(1,0){12600}}
\multiput(0,900)(10800,0){2}{\line(0,1){600}}
\put(0,1500){\line(1,0){10800}}
\multiput(11100,-600)(600,1200){2}{\line(1,0){600}}
\put(11700,600){\line(0,-1){1200}}
\put(1600,-690){\tiny $\gamma$}
\put(14200,-690){\tiny $\gamma$}
\end{picture}
\end{align*}
\end{enumerate}

\section{Proof of Theorem~\ref{th}}
\label{sec:proof}

Let $G/H$ be a spherical homogeneous space with corresponding
homogeneous spherical datum $\Ss \coloneqq (\Mm, \Sigma, S^p, \Dm^a)$ and let
$G/H \hookrightarrow X$ be a simple spherical embedding with corresponding
colored cone $(\Cm, \Fm)$. We use the notation from the introduction.

We apply the local structure theorem for spherical varieties (see~\cite[Theorem~15.17]{ti}).
Set
\begin{align*}
X_0 \coloneqq X \setminus \bigcup_{D\in \Dm\setminus \Fm} \overline{D}\text{,}
\end{align*}
let $P \coloneqq P_{S_\Fm} \subseteq G$ be the parabolic subgroup
corresponding to $S_\Fm$, and let $P = L \ltimes P_u$ be its Levi decomposition.
Then there exists an $L$-invariant closed subvariety $Z \subseteq X_0$
such that we have natural $P$-equivariant isomorphisms
\begin{align*}
P_u \times Z \cong P *_L Z \cong X_0\text{.}
\end{align*}
Moreover, $Z$ is an affine spherical $L$-variety.
Let $\Ss_Z \coloneqq (\Mm_Z, \Sigma_Z, S^p_Z, \Dm^a_Z)$ be
the homogeneous spherical datum of the open $L$-orbit in $Z$,
and let $\Dm_Z$ be its set of colors.
The spherical variety $Z$ is simple, and we denote by
$(\Cm_Z, \Fm_Z)$ the colored cone corresponding to it.
We have $\Mm_Z = \Mm$, $\Cm_Z = \Cm$, and $\Fm_Z = \Dm_Z$.

Also recall from the local structure theorem
that the closed $L$-orbit in $Z$ is fixed pointwise
by the derived subgroup $[L, L]$, so that $Z \cong L *_{L'} Z'$
where $L'$ is a closed subgroup of $L$ containing
$[L,L]$ and $Z'$ is an affine $L'$-variety with a fixed point,
spherical under $(L')^\circ$.

Let $\Ss_* \coloneqq (\Mm, \Sigma_*, S^p_*, \Dm^a_*)$ be the 
localization of $\Ss$ at $S_\Fm$, and
write $\Dm_*$ for the abstract set of colors determined by 
$\Ss_*$ together with the associated map $\rho_*\colon \Dm_* \to \Nm$.
Our first aim is to show $\Ss_Z = \Ss_*$. The following
two results are adapted from the work of Camus.

\begin{lemma}[{\cite[Lemme 5.2]{camus}}]
\label{le:c}
Let $H_1, H_2 \subseteq G$ be spherical subgroups such that $H_1 \subseteq H_2$
and the natural map $G/{H_1} \to G/{H_2}$ induces a bijection of colors.
Then $H_1$ and $H_2$ have the same spherical closure, i.e.~$\overline{H_1} = \overline{H_2}$.
\end{lemma}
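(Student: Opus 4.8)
The plan is to show that passing from $H_1$ to $H_2$ changes neither the set of spherical roots $\Sigma$ (up to the doubling allowed in the spherical closure), nor the distinguished data $S^p$ and $\Dm^a$, by exploiting the fact that the map $G/H_1 \to G/H_2$ induces a bijection of colors. The key observation is that the spherical closure is defined intrinsically in terms of the combinatorial data of the colors: recall that $\overline{H}$ is the stabilizer in $N_G(H)$ of the set $\Dm$, and that the homogeneous spherical datum $\overline{\Ss}$ is obtained from $\Ss$ purely through the rule that doubles each $\gamma \in \Sigma$ when possible while keeping $S^p$ and $\Dm^a$ fixed. Since both of these quantities are read off from $\Dm$ together with the maps $\rho$ and $\varsigma$, it suffices to show that $H_1$ and $H_2$ have the same colors with the same associated invariants.

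First I would fix the dominant map $\pi\colon G/H_1 \to G/H_2$ and recall that it is $G$-equivariant with connected fibers, inducing a pullback $\pi^*\colon \C(G/H_2) \hookrightarrow \C(G/H_1)$ on function fields and a corresponding inclusion $\Mm_2 \subseteq \Mm_1$ of weight lattices. Under the hypothesis that $\pi$ induces a bijection of colors, I would argue that each color $D \in \Dm_{1}$ is the pullback of a unique color $\pi(D) \in \Dm_2$, and that the associated data match: $\varsigma_1(D) = \varsigma_2(\pi(D))$ because the action of each minimal parabolic $P_\alpha$ is compatible with $\pi$, and the valuations $\rho_1(D)$ and $\rho_2(\pi(D))$ are related through the lattice inclusion. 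The crucial point is that the valuation cone and the spherical roots are determined by the combinatorial structure of the colors: since the set of colors moved by each $P_\alpha$ coincides, the sets $S^p_1 = S^p_2$ agree, and since the image of $\rho$ and the resulting geometry of $\Dm$ coincide, the valuation cones $\Vm_1$ and $\Vm_2$ have the same linear span of supporting hyperplanes up to the scaling distinguishing a root from its double.

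Next I would translate this into the statement about spherical closures. Since $H_1 \subseteq H_2$ forces $\Mm_2 \subseteq \Mm_1$, and the spherical closure of each $H_i$ has weight lattice $\lspan_\Z \overline{\Sigma_i}$, I would show that $\overline{\Sigma_1} = \overline{\Sigma_2}$ by comparing the half-spaces defining the valuation cones: the spherical roots are the primitive normals, and the passage to the spherical closure replaces $\gamma$ by $2\gamma$ precisely when this is forced by the color data, which by the bijection is identical for $H_1$ and $H_2$. Combined with $S^p_1 = S^p_2$ and $\Dm^a_1 = \Dm^a_2$ (the type-$a$ colors are determined by which spherical roots lie in $S$, together with the map $\rho$, all of which are preserved), this yields $\overline{\Ss_1} = \overline{\Ss_2}$, hence $\overline{H_1} = \overline{H_2}$ by the injectivity of the assignment $H \mapsto \overline{\Ss}$ up to conjugation.

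**The hard part will be** making precise the claim that the valuation cones and spherical roots coincide up to the doubling operation purely from the hypothesis on colors, since a priori the lattices $\Mm_1$ and $\Mm_2$ differ and the spherical roots $\Sigma_1, \Sigma_2$ could be genuinely distinct primitive vectors even if their $\Q$-spans agree. The bijection of colors controls $\rho$ and $\varsigma$, but it does not immediately pin down the primitivity normalization, which is exactly the subtlety that the spherical-closure construction is designed to absorb. I expect the cleanest route is to invoke the characterization of the spherical closure as the largest subgroup between $H$ and $N_G(H)$ acting trivially on $\Dm$, and to verify directly that this subgroup depends only on $\Dm$ with its maps $\rho$ and $\varsigma$; granting the bijection of colors, the two spherical closures then coincide by construction, bypassing the delicate lattice-normalization issue entirely.
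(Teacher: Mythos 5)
Your proposal has a genuine gap, and it sits exactly where the known difficulty of this lemma lies. The conclusion to be proved is an equality of subgroups of $G$, namely $\overline{H_1} = \overline{H_2}$, but your argument is designed to produce only an equality of combinatorial data, and your final step --- invoking the injectivity of the classification \enquote{up to conjugation} \cite{losev-uniq} --- yields only that $\overline{H_1}$ and $\overline{H_2}$ are conjugate in $G$, not equal. There is no bridge back from conjugacy to equality in your text. The proposed \enquote{bypass} does not repair this: the spherical closure of $H_i$ is by definition the subgroup of $N_G(H_i)$ acting trivially on $\Dm_i$, so it is a specific subgroup of $N_G(H_i)$, not a function of the abstract triple $(\Dm_i, \rho_i, \varsigma_i)$; only its conjugacy class is a combinatorial invariant. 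Concretely, since $H_2$ is not assumed to normalize $H_1$, the phrase \enquote{$H_2$ acts on $\Dm_1$} is not even defined, so you cannot conclude $H_2 \subseteq \overline{H_1}$ \enquote{by construction}. Your characterization of $\overline{H_1}$ as the largest subgroup \emph{between $H_1$ and $N_G(H_1)$} acting trivially on colors is internal to $N_G(H_1)$ and says nothing about subgroups like $H_2$ that may lie outside it; producing the missing containment (equivalently, relating $N_G(H_1)$ and $N_G(H_2)$, which a priori are incomparable) is precisely the content of the lemma and is nowhere established.

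A second, independent gap is the claim that the bijection of colors forces the valuation cones, hence the spherical roots, to agree up to doubling. Even granting Knop's theorem that restriction of invariant valuations maps $\Vm_1$ onto $\Vm_2$, what follows is only that the cone of non-positive linear functionals on $\Vm_2$ equals $\cone(\Sigma_1) \cap (\Mm_2 \otimes_\Z \Q)$; the extremal rays of such an intersection need not be spanned by multiples of elements of $\Sigma_1$, so a priori $\Sigma_2$ may contain genuinely new directions, and ruling this out requires exactly the nontrivial input you hoped to avoid. (Minor additional slip: the fibers of $G/H_1 \to G/H_2$ are copies of $H_2/H_1$ and need not be connected.) For comparison, the paper does not reprove the lemma at all: it records that Camus's published proof \cite[Lemme~5.2]{camus} relies on the erroneous \cite[Th\'eor\`eme~4.3(iii)]{brsph} (see \cite[Remark~4]{avdn}) and instead deduces the statement directly from \cite[Lemma~2.4.2]{f4}. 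That history is a useful warning here: the step your plan treats as bookkeeping is the step on which a published argument already foundered, and the accepted fix is to import Bravi--Pezzini's result rather than to re-derive the color/valuation dictionary by hand.
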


As pointed out in \cite[Remark~4]{avdn}, there is a mistake
in \cite[Th\'eor\`eme~4.3(iii)]{brsph}, which is used
in the proof of \cite[Lemme 5.2]{camus}. Nevertheless, Lemma~\ref{le:c}
is correct and follows directly from \cite[Lemma~2.4.2]{f4}.

\begin{prop}
We have $\Ss_Z = \Ss_*$.
\end{prop}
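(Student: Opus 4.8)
The plan is to verify the equality $\Ss_Z = \Ss_*$ one component at a time, transporting data between $G/H$ and the open $L$-orbit $L/H_Z \subseteq Z$ by means of the isomorphism $X_0 \cong P_u \times Z$ from the local structure theorem. The identity $\Mm_Z = \Mm$ of weight lattices is already recorded, so both data live in the same lattice $\Mm$ with dual $\Nm$, and it remains to match the colors together with $\rho$, the set $S^p$, and the spherical roots.

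First I would dispose of the colors and of $S^p$, which come almost directly from the local structure theorem. Every color $D$ with $\varsigma(D)\cap S_\Fm \ne \emptyset$ lies in some $\Dm(\alpha)$ with $\alpha \in S_\Fm$, hence in $\Fm$ (since $\alpha \in S_\Fm$ means $\Dm(\alpha)\subseteq \Fm$), so $\overline{D}$ is not removed in passing to $X_0$; conversely a color with $\varsigma(D)\cap S_\Fm = \emptyset$ is $P$-stable and therefore cannot meet the open $P$-orbit $\Omega \cong P_u \times (L/H_Z)$, so it contributes no color to $Z$. This yields a bijection between $\Dm_Z$ and $\Dm_* = \{D \in \Dm : \varsigma(D)\cap S_\Fm \ne \emptyset\}$ under which $\rho$ is preserved (because $\Mm_Z = \Mm$ and the divisorial valuation of a surviving color restricts to that of its image in $Z$) and under which the simple roots of $S_\Fm$ moving a color are the same in $Z$ as in $G/H$. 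In particular, for $\alpha \in S_\Fm$ one has $\Dm_Z(\alpha) = \emptyset$ exactly when $\Dm(\alpha) = \emptyset$, giving $S^p_Z = S^p \cap S_\Fm = S^p_*$.

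The heart of the proof is the identification $\Sigma_Z = \Sigma \cap \lspan_\Q S_\Fm = \Sigma_*$. Since $Z$ is a spherical $L$-variety, its spherical roots lie in the span of the roots of $L$, so $\Sigma_Z \subseteq \lspan_\Q S_\Fm$. To compare the valuation cones inside the common space $\Nm_\Q$, I would restrict invariant valuations: using $X_0 \cong P_u \times Z$, every $G$-invariant (hence $L$-invariant) valuation on $\C(G/H)$ induces an $L$-invariant valuation on $\C(Z)$ with the same pairing on $\Mm = \Mm_Z$, which shows $\Vm \subseteq \Vm_Z$. One then argues that no facet of the larger cone $\Vm_Z$ is normal to a spherical root outside $\lspan_\Q S_\Fm$, so that $\Vm_Z$ is cut out by exactly $\Sigma \cap \lspan_\Q S_\Fm$; here the decomposition $Z \cong L *_{L'} Z'$ recalled above is what makes the slice amenable. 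Because $\Sigma$ consists of primitive elements of $\Mm = \Mm_Z$ and $\Sigma_Z$ is the set of primitive normals to the facets of $\Vm_Z$, this gives $\Sigma_Z = \Sigma \cap \lspan_\Q S_\Fm$. Finally, once the spherical roots agree, the type of each color in $\Dm_*$ is computed from the same roots in $Z$ and in $G/H$ (for $\alpha \in S_\Fm$ one has $\alpha \in \Sigma_Z \Leftrightarrow \alpha \in \Sigma$, and likewise for $2\alpha$), so $\Dm^a_Z = \Dm^a_*$ and the verification is complete.

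I expect the reverse valuation inclusion to be the main obstacle: restriction only gives $\Vm \subseteq \Vm_Z$, and extending an $L$-invariant valuation of the affine slice $Z$ to a $G$-invariant valuation of $X$ while controlling its weight is the delicate point, which is precisely the assertion that spherical roots localize. A conceptual way to bypass the explicit cone computation — and, I suspect, the reason Lemma~\ref{le:c} is isolated — is to match only the \emph{spherical closures}: having already matched colors (with $\rho$) and $S^p$, it suffices to show $\overline{\Ss_Z} = \overline{\Ss_*}$, since the spherical closure merely rescales each spherical root within its own ray, so that the common lattice $\Mm_Z = \Mm_*$ recovers $\Sigma_Z = \Sigma_*$ from the shared rays. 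In this spirit one realizes $\overline{\Ss_*}$ by a spherical subgroup of $L$ and applies Lemma~\ref{le:c} (with $L$ in place of $G$) to an inclusion inducing a bijection of colors, again concluding $\Ss_Z = \Ss_*$.
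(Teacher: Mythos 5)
Your handling of the colors and of $S^p$ via the local structure theorem is fine, and your closing reduction --- that since $\Mm_Z = \Mm$ and spherical roots are primitive in $\Mm$, it suffices to prove $\overline{\Ss_Z} = \overline{\Ss_*}$ because spherical closure only rescales roots along their rays --- is exactly the reduction the paper makes. But on the decisive point, the spherical roots, both of your routes stop short. The valuation-cone route only yields $\Vm \subseteq \Vm_Z$, i.e.\ that every element of $\Sigma_Z$ is a nonnegative combination of elements of $\Sigma$; the step \enquote{one then argues that no facet of $\Vm_Z$ is normal to a spherical root outside $\lspan_\Q S_\Fm$} is not an argument but a restatement of the claim $\Sigma_Z = \Sigma \cap \lspan_\Q S_\Fm$, which is the proposition itself. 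You concede this (\enquote{precisely the assertion that spherical roots localize}), and invoking $Z \cong L *_{L'} Z'$ does not by itself supply the missing reverse inclusion.

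Your fallback is the paper's strategy, but it omits the two constructions that make Lemma~\ref{le:c} usable, and these are the actual content of the proof. First, one must realize $\overline{\Ss_*}$ as the homogeneous spherical datum of a concrete $L$-homogeneous space: the paper does this by taking the complete simple toroidal embedding $G/\overline{H} \hookrightarrow Y$, which is wonderful by \cite[Corollary~7.6]{knoparc}, and forming its localization $Y_*$ at $P$, whose spherical system is $\overline{\Ss_*}$ by \cite[Proposition~1.2.3]{f4}. Note that this last citation is where \enquote{spherical roots localize} is actually established (in the wonderful setting), i.e.\ it is the very fact your first route lacked; the role of the spherical closure is precisely to reduce to that setting. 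Second, one must connect $Z$ to $Y_*$: the paper composes $Z \hookrightarrow X \dashrightarrow Y \twoheadrightarrow Y_*$, notes that $Z$ meets the open $P$-orbit of $X$ so this is a dominant $L$-equivariant rational map $Z \dashrightarrow Y_*$, hence gives an inclusion $H_1 \subseteq H_2$ of the generic isotropy groups of the open $L$-orbits, and then verifies the color hypothesis of Lemma~\ref{le:c} by checking that $j$ and $\pi$ each induce bijections between the colors of $Z$ (resp.\ $Y_*$) and the $P$-unstable colors of $X$ (resp.\ $Y$). Writing \enquote{one realizes $\overline{\Ss_*}$ by a spherical subgroup of $L$ and applies Lemma~\ref{le:c} to an inclusion inducing a bijection of colors} names the destination without building the road: without the wonderful variety $Y$ and its localization $Y_*$ there is no subgroup realizing $\overline{\Ss_*}$, no inclusion, and no color bijection to feed into the lemma.
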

\begin{proof}

First, we show $\overline{\Ss_Z} = \overline{\Ss_*}$ by reproducing
the second part of the proof of \cite[Proposition~6.1]{camus}.
According to \cite[Corollary~7.6]{knoparc},
the unique complete simple toroidal embedding
$G/\overline{H} \hookrightarrow Y$ is smooth, \ie $Y$ is a wonderful variety.
We have a rational map
$X \dashrightarrow Y$. Let $Y_*$ be the localization of $Y$ at $P$ (see~\cite[1.1.4]{f4}
or \cite[30.9]{ti}). Then the spherical system of $Y_*$ is $\overline{\Ss_*}$ (see~\cite[Proposition~1.2.3]{f4}).
We obtain the following diagram.
\begin{align*}
\xymatrix{
Z \ar@{^{(}->}[r]^{j} & X \ar@{-->}[r] & Y \ar@{->>}[r]^{\pi} & Y_*\\
L/H_1 \ar@{->>}[rrr] \ar@{^{(}->}[u] & & & L/H_2 \ar@{^{(}->}[u]
}
\end{align*}
As $Z$ meets the open $P$-orbit in $X$ and the open $P$-orbit
in $Y$ is sent to the open $L$-orbit in $Y_*$,
the top row defines a dominant $L$-equivariant rational map $Z \dashrightarrow Y_*$,
which hence induces the map of open $L$-orbits in the bottom row.
As $j$ (resp.~$\pi$) induces a bijection between the colors in $Z$
(resp.~in $Y_*$) and the $P$-unstable colors in $X$ (resp.~in $Y$),
the open $L$-orbits $L/H_1$ in $Z$ and $L/H_2$ in $Y_*$ satisfy the
assumptions of Lemma~\ref{le:c}, which yields $\overline{\Ss_Z}
= \overline{\Ss_*}$. Since we have $\Mm_Z = \Mm$, we obtain $\Ss_Z = \Ss_*$.
\end{proof}

From now on, we will assume that $X$ (and hence $Z$) is
locally factorial.
As condition (1) of Theorem~\ref{th}
is just the characterization of local factoriality,
the following result will conclude the proof of Theorem~\ref{th}.

\begin{prop}
$Z$ is smooth if and only if the following conditions are satisfied:
\begin{enumerate}
\item Every indecomposable component of $\overline{\Ss_*}$ which contains a color appears in the list $\LL$.
\item Let $\gamma_1, \dots, \gamma_r$ be the spherical roots of $\overline{\Ss_*}$ which correspond
to the marked spherical roots in the list $\LL$.
We denote by $\Us \subseteq \Nm$ the set of primitive
generators of the rays in the set $\Cm(1) \setminus \Q_{\ge 0}\rho_*(\Dm_*)$.
There exist pairwise distinct elements $u_1, \dots, u_r \in \Us$
such that for every $1 \le i \le r$ we have $\langle u_i, \gamma_i\rangle = -1$
and $\langle u, \gamma_i\rangle = 0$ for $u \in \Us \setminus \{u_i\}$. If $\gamma$
is any other spherical root, we have $\langle u, \gamma \rangle = 0$ for every $u \in \Us$.
\end{enumerate}
\end{prop}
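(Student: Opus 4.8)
The plan is to decide the smoothness of $Z$ by reducing it, through Luna's slice theorem, to the recognition of a multiplicity-free space, and then to read off the combinatorial conditions from the list $\LL$. Recall that $Z \cong L *_{L'} Z'$ is a fiber bundle over the torus quotient $L/L'$ (which is smooth, since $L' \supseteq [L,L]$) with fiber the affine $L'$-variety $Z'$. Hence $Z$ is smooth if and only if $Z'$ is smooth, and I would transfer the entire question to $Z'$.

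I would then apply the étale slice theorem at the $L'$-fixed point $z' \in Z'$. Since $Z'$ is affine, contains the fixed point $z'$, and is spherical under $(L')^\circ$, the smoothness of $Z'$ is equivalent to the assertion that $Z'$ is $L'$-equivariantly isomorphic to the tangent representation $V \coloneqq T_{z'}(Z')$, and in that case $V$ is a multiplicity-free space (see~\cite{luna-slet} and the discussion in the introduction). Because the spherical system depends only on the root system and is insensitive to central tori and isogenies, I would replace $L'$ by the standard group $G^{ss} \times (\C^*)^k$ of the classification, so that $V$ becomes a multiplicity-free space in the sense used to define $\LL$, with $\overline{\Ss_V} = \overline{\Ss_{Z'}} = \overline{\Ss_*}$ and with colored cone equal to that of $Z'$.

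Next I would decompose $V = V_1 \oplus \dots \oplus V_k$ into indecomposable multiplicity-free factors, matching the decomposition of $\overline{\Ss_*}$ into indecomposable components. The factors with nontrivial semisimple part carry colors and are precisely the entries of $\LL$, which is condition~(1); the remaining factors are lines with a scalar $\C^*$-action, contribute colorless components, and are automatically smooth, imposing no constraint. For the necessity of~(1) one reads it off from $Z' \cong V$; for sufficiency one assembles $V$ as the direct sum of the multiplicity-free spaces provided by the list, together with the trivial lines of the colorless components. To pin down condition~(2), note that since $Z'$ has a fixed point its colored cone is the full colored cone of the vector space $V$, whose extremal rays split into the color rays lying in $\Q_{\ge 0}\rho_*(\Dm_*)$ and the toroidal rays coming from the $G$-invariant prime divisors $W \subseteq V$; the latter are exactly the rays with primitive generators in $\Us$. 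The marking of a spherical root records precisely the existence of such a $W$ with $\langle \nu_W, \gamma \rangle = -1$, and condition~(2) is the assertion that $\Us$ stands in the diagonal correspondence with the marked roots dictated by $\LL$.

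The main obstacle is the bidirectional verification of this dictionary on the explicit list. For necessity I must check that the $G$-invariant prime divisors of every multiplicity-free space occurring in $\LL$ pair with the spherical roots exactly as in condition~(2): each marked $\gamma_i$ is paired to $-1$ by a single $G$-invariant divisor, every other root is orthogonal to all of them, and distinct divisors mark distinct roots. This is the computation illustrated in Example~\ref{ex:kl}, to be carried out for each diagram of Section~\ref{sec:list}. For sufficiency I must show that conditions~(1) and~(2), together with the standing local factoriality (which forces $\Cm$ to be spanned by part of a $\Z$-basis of $\Nm$, matching the unimodular cone of a vector space), determine a colored cone and a homogeneous spherical datum that coincide with those of the reconstructed $V$; then the Luna--Vust classification (see~\cite{lunavust, knopsph}) and the uniqueness of the spherical subgroup attached to a homogeneous spherical datum (see~\cite{losev-uniq}) yield $Z' \cong V$, so $Z'$ is smooth. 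The delicate point throughout is the compatibility of the lattice $\Mm$ and of the toroidal rays $\Us$ with the data recorded in the spherically closed list entries, which is exactly what condition~(2) is engineered to encode.
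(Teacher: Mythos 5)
Your \enquote{only if} direction is essentially the paper's argument (Luna's slice theorem at the fixed point of $Z'$, reduction to a multiplicity-free space, decomposition into indecomposables), though you gloss over the two points carrying the content there: transferring conditions (1) and (2) between $Z$ and $V$ uses Lemma~\ref{le:c} together with the compatibility $\nu_{\pi^{-1}(W)}(\pi^*f_\gamma)=\nu_W(f_\gamma)$ for the projection $Z\cong L/L'\times V\to V$, and replacing $R(L)$ by $(\C^*)^k$ rests on \cite[Section~7]{sr-leahy}, not merely on \enquote{insensitivity to central tori}. The genuine gap is in your \enquote{if} direction. You propose to deduce $Z'\cong V$ from Losev's uniqueness theorem plus Luna--Vust, claiming that conditions (1), (2) and local factoriality determine a homogeneous spherical datum coinciding with that of the reconstructed $V$. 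They do not: conditions (1) and (2) constrain only the spherical closure $\overline{\Ss_*}$ and the position of the rays in $\Us$, whereas Losev's theorem requires equality of the \emph{full} datum, including the lattice $\Mm$, which is not determined by $\overline{\Ss_*}$. In fact the identification $Z'\cong V$ you aim for is false in general. Take $L=\SL_2\times\C^*$ acting on $Z=\C^2\oplus\C$ by $(g,t)\cdot(v,z)=(tgv,tz)$. This is a smooth, locally factorial, simple affine spherical $L$-variety whose closed orbit is a fixed point (so one may take $L'=L$ and $Z'=Z$); its open orbit is $L/U^-$ with $U^-$ a maximal unipotent subgroup, so $\Sigma_*=\emptyset$, there is exactly one color, and $\overline{\Ss_*}$ is precisely the spherical system of entry (1) of $\LL$ with $n=2$. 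Conditions (1) and (2) hold, with $\Us$ consisting of the single ray of the invariant divisor $\C^2\times\{0\}$, which pairs trivially with the (empty) set of spherical roots. But the multiplicity-free space reconstructed from the list is $V=\C^2$ under $\SL_2\times\C^*$, whose lattice $\Mm_V$ has rank $1$, while $\rk\Mm=2$: the data $\Ss_*$ and $\Ss_V$ differ, and indeed $\dim Z'=3\ne 2=\dim V$, so no uniqueness theorem can identify them. Note also that the extra direction leaves no trace in $\overline{\Ss_*}$ --- this spherical system is indecomposable with a color, so there is no \enquote{colorless component} to which you could attach a trivial line; the discrepancy is visible only in $\Mm$ and in $\Us$.

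This lattice discrepancy is exactly what the paper's Cox-ring argument is built to absorb, and it is the reason the sufficiency proof there looks nothing like an appeal to uniqueness. The paper constructs $V$ abstractly from conditions (1) and (2), completes both $Z$ and $V$ to varieties $Z_c$, $V_c$ with $\Gamma(\Om)=\C$ by adding codimension-one orbits, invokes $\Rm(Z_c)\cong\Rm(V_c)[T_1,\dots,T_k]$ from \cite{brcox}, deduces $\pi_Z^{-1}(Z)\cong\pi_V^{-1}(V)\times(\C^*)^k$ for the characteristic spaces, and then descends smoothness along these quotients, which have trivial isotropy precisely because of local factoriality \cite{coxrings}. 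In the example above this mechanism says that $Z$, while not isomorphic to $V$, agrees with it up to a torus factor on the level of characteristic spaces, and smoothness transfers. Any correct proof of sufficiency must contain a comparison of $Z$ with $V$ \enquote{up to a torus} of this kind; your proposal has none, and the step \enquote{Luna--Vust and Losev yield $Z'\cong V$} cannot be repaired within your outline.
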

\begin{proof}
\enquote{$\Rightarrow$}: 
It is not harmful to replace $L$ with the finite cover
$[L,L] \times R(L)$ where $R(L)$ denotes the radical of $L$.
Then $L' = [L, L] \times C$ for some closed subgroup
$C \subseteq R(L)$. Recall
that we have $Z \cong L *_{L'} Z'$. As $Z$ is smooth,
it follows from the \'etale slice theorem of Luna (see~\cite{luna-slet})
that $Z'$ is isomorphic to a spherical $(L')^\circ$-module $V$,
on which $L'$ acts linearly.

The following arguments are taken from \cite[6.2]{camus}.
As $C$ acts on $V$ via characters,
the $L'$-action on $V$ may be extended to an $L$-action,
yielding an $L$-equivariant isomorphism $L *_{L'} Z' \cong L/L' \times V$.
As the projection
\begin{align*}
\pi\colon Z \cong L *_{L'} Z' \cong L/L' \times V \to V
\end{align*}
induces a bijection
of $B\cap L$-invariant (resp. $L$-invariant) prime divisors,
it follows from Lemma~\ref{le:c} 
and from
\begin{align*}
\nu_{\pi^{-1}(W)}(\pi^*f_\gamma) = \nu_W(f_\gamma) \text{ for any $L$-invariant prime
divisor $W \subseteq V$ and $\gamma \in \overline{\Sigma_*}$ }
\end{align*}
that conditions (1) and (2) respectively
are not affected by passing from $Z$ to $V$.

Let $V = V_1 \oplus \dots \oplus V_k$
be the decomposition into irreducible $L$-modules. Then $V$ is also a spherical
$[L,L] \times (\C^*)^k$-module. It follows from \cite[Section~7]{sr-leahy}
that replacing $R(L)$ with $(\C^*)^k$ does not change whether conditions (1) and (2)
are satisfied.

\enquote{$\Leftarrow$}:
As conditions (1) and (2) are satisfied, 
there exists a spherical $L$-module $V$ 
such that the spherical closure of the homogeneous spherical
datum of its open $L$-orbit is $\overline{\Ss_*}$,
and there is a bijection
\begin{align*}
\Psi\colon \{\text{$L$-invariant prime divisors in $V$}\} \to \Us
\end{align*}
such that $\nu_{W}(f_{\gamma}) = \langle \Psi(W), \gamma \rangle$ for
every $L$-invariant prime divisor $W \subseteq V$ and $\gamma \in \overline{\Sigma_*}$.
We denote by $\Vm_V$ the valuation cone of the open $L$-orbit in $V$
in the corresponding vector space $\Nm_{V,\Q} \coloneqq \Nm_V \otimes_\Z \Q$
and by $(\Cm_V, \Fm_V)$ the colored cone of the simple embedding $V$.
We choose rays $\sigma_1, \dots, \sigma_l \subseteq \Vm_V$
not meeting $\Cm_V$
such that $\cone(\Cm_V, \sigma_1, \dots, \sigma_l) = \Nm_{V,\Q}$.
These rays correspond to $L$-orbits of codimension $1$, which can be added
to $V$ resulting in a smooth variety $V_c$ such that
$\Gamma(V_c, \Om_{{V_c}}) = \C$.

We may identify $\Dm_*$ with the set of colors in the open $L$-orbit in $V$
and write $\rho_V\colon \Dm_* \to \Nm_V$ for the associated map.
As $V$ is locally factorial,
we may define an inclusion $j\colon \Nm_{V,\Q} \hookrightarrow \Nm_\Q$
sending $\nu_W \mapsto \Psi(W)$ for every
$L$-invariant prime divisor $W \subseteq V$
and $\rho_V(D) \mapsto \rho_*(D)$ for every $D \in \Dm_*$.

We set $\ell\Vm_* \coloneqq -\Vm_* \cap \Vm_*$
where $\Vm_* \subseteq \Nm_\Q$ is the valuation cone of the
open $L$-orbit in $Z$.
As $\lspan_\Q(\Cm \cup \ell\Vm_*) \supseteq \lspan_\Q (\rho_*(\Dm_*) \cup \ell\Vm_*) = \Nm_\Q$,
we can choose rays $\rho_1, \dots, \rho_k \subseteq \ell\Vm$
such that $\cone(\Cm, j(\sigma_1), \dots, j(\sigma_l), \rho_1, \dots, \rho_k) = \Nm_\Q$.
The rays $j(\sigma_1), \dots, j(\sigma_l), \rho_1, \dots, \rho_k$
correspond to $L$-orbits of codimension $1$, which can be added
to $Z$ resulting in a locally factorial variety $Z_c$
(smooth if and only if $Z$ is so) such that $\Gamma(Z_c, \Om_{Z_c}) = \C$.

For the Cox rings $\Rm(Z_c)$ and $\Rm(V_c)$ we have
$\Rm(Z_c) \cong \Rm(V_c)[T_1, \dots, T_k]$
according to \cite[4.3.2]{brcox} (where actually an equivariant
version of the Cox ring is considered, but the result also holds for
the standard Cox ring, see~\cite[Remark~5.4]{g1}).

There are open subsets
$U_Z \subseteq \Spec(\Rm(Z_c))$ and $U_V \subseteq \Spec(\Rm(V_c))$
with complements of codimension at least $2$
such that there are good geometric quotients $\pi_Z\colon U_Z \to Z_c$ and $\pi_V\colon U_V \to V_c$ (see~\cite[Construction~1.6.3.1]{coxrings}).

The coordinate ring of the affine variety
$\pi_Z^{-1}(Z)$ (resp.~$\pi_V^{-1}(V)$) is
the localization of $\Rm(Z_c)$ (resp.~of $\Rm(V_c)$) where
the elements corresponding to the ($L$-invariant)
prime divisors in $Z_c$ (resp.~in $V_c$) not contained
in $Z$ (resp.~in $V$) are inverted (see~\cite[Proposition~1.5.2.4]{coxrings}),
so that we obtain $\C[\pi_Z^{-1}(Z)] \cong \C[\pi_V^{-1}(V)][T_1^{\pm 1}, \dots, T_k^{\pm 1}]$,
which implies $\pi_Z^{-1}(Z) \cong \pi_V^{-1}(V) \times (\C^*)^k$.

In particular, $\pi_Z^{-1}(Z)$ is smooth if and only if $\pi_V^{-1}(V)$ is smooth.
It follows from the \'etale slice theorem of Luna that this
is equivalent to $Z$ and $V$ being smooth respectively
since the local factoriality
of $Z$ (resp.~of $V$) implies that the isotropy groups for the quotient
$\pi_Z$ (resp.~$\pi_V$) are trivial (see~\cite[Corollary~1.6.2.7]{coxrings}).
In particular, the smoothness of $Z$ follows from the smoothness of $V$.
\end{proof}

\section*{Acknowledgments}
I would like to thank Victor Batyrev, J\"urgen Hausen,
Johannes Hofscheier,
and Guido Pezzini for many useful discussions
as well as Paolo Bravi for making available
a \LaTeX{} package providing macros to draw Luna diagrams.
Finally, I am grateful to the referee for several helpful
remarks and comments. 

\bibliographystyle{amsalpha}
\bibliography{cscsv}

\end{document}